\title{Operad profiles of Nijenhuis structures}
\author{Henrik Strohmayer}
\address{Department of Mathematics \\ Stockholm University \\ 106 91 Stockholm \\ Sweden}
\email{henriks@math.su.se}
\newcommand{\N}{{\ensuremath{\mathbb N}}}
\newcommand{\R}{{\ensuremath{\mathbb R}}}
\newcommand{\K}{{\ensuremath{\mathbb K}}}
\newcommand{\Ccal}{\ensuremath{\mathcal{C}}}
\newcommand{\Lcal}{{\ensuremath{\mathcal{L}}}}
\newcommand{\Ncal}{{\ensuremath{\mathcal{N}}}}
\newcommand{\Ocal}{\ensuremath{\mathcal{O}}}
\newcommand{\Pcal}{\ensuremath{\mathcal{P}}}
\newcommand{\Qcal}{\ensuremath{\mathcal{Q}}}
\newcommand{\Tcal}{\ensuremath{\mathcal{T}}}
\newcommand{\gfrak}{{\ensuremath{\mathfrak{g}}}}
\newcommand{\Lie}{\mathcal{L}ie}
\newcommand{\Com}{\mathcal{C}om}
\newcommand{\Prelie}{\mathcal{P}re\mathcal{L}ie}
\newcommand{\Perm}{\mathcal{P}erm}
\newcommand{\Lieone}{\Lie^1}
\newcommand{\End}{\mathcal{E}nd}
\newcommand{\Nijenhuis}{\ensuremath{\mathcal{N}ij}}
\newcommand{\Binijenhuis}{\ensuremath{\mathcal{B}i\mathcal{N}ij}}
\newcommand{\Preliesquared}{\ensuremath{pre\text{-}\mathcal{L}ie^2}}
\newcommand{\Nijinfty}{Nijenhuis$_\infty$}
\newcommand{\Binijinfty}{bi-Nijenhuis$_\infty$}
\newcommand{\s}{\ensuremath{\mathbb{S}}}
\newcommand{\smodule}{\ensuremath{\s}\text{-module}}
\newcommand{\Free}{\mathcal{F}}
\newcommand{\QO}[2]{\Free(#1)/ ( #2 )}
\newcommand{\FN}[2]{[#1,#2]_{_{\text{F-N}}}}
\newcommand{\FNh}[2]{[#1,#2]_{_{\text{F-N$_\hslash$}}}}
\newcommand{\p}{\partial}
\newcommand{\ddt}[1]{\frac{\p\hspace{8pt}}{\p t^{#1}}}
\newcommand{\sdt}{\gamma}
\newcommand{\ol}[1]{{\ensuremath{\overline{#1}}}}
\newcommand{\Qf}{\hat{Q}}
\newcommand{\Jf}{\hat{J}}
\newcommand{\Jfa}{\hat{J}}
\newcommand{\Jfb}{\hat{K}}
\newcommand{\pvfs}{\wedge^\bullet\Tcal_V}
\newcommand{\vfms}{\Omega^\bullet_V\otimes\Tcal_V}
\newcommand{\vfmsi}[1]{\Omega^{#1}_V\otimes\Tcal_V}
\newcommand{\vfmsh}{(\vfms)\llbracket\hslash\rrbracket}
\newcommand{\vfmshi}[1]{(\vfmsi{#1})\llbracket\hslash\rrbracket}
\newcommand{\cz}{\scriptscriptstyle{\lor}}
\newcommand{\antishriek}{\text{\normalfont{<}}}
\DeclareMathOperator{\coker}{coker}
\DeclareMathOperator{\Ho}{H}
\DeclareMathOperator{\Hom}{Hom}
\DeclareMathOperator{\Ind}{Ind}
\DeclareMathOperator{\sgn}{sgn}
\DeclareMathOperator{\Id}{Id}
\DeclareMathOperator{\ELL}{L}
\DeclareMathOperator{\BC}{B}
\DeclareMathOperator{\CBC}{\Omega}
\DeclareMathOperator{\tot}{tot}
\newtheorem*{lem*}{Lemma}
\newtheorem{prop}[subsubsection]{Proposition}
\newtheorem*{prop*}{Proposition}
\newtheorem{thm}[subsubsection]{Theorem}
\newtheorem*{thm*}{Theorem}
\newtheorem{cor}[subsubsection]{Corollary}
\newtheorem{thmalpha}{Theorem}
\theoremstyle{definition}
\newtheorem*{rem*}{Remark}
\newtheorem*{rems*}{Remarks}
\newtheorem*{defn*}{Definition}
\newtheorem*{ex*}{Example}
\newcommand{\bbone}{1\hspace{-2.6pt}\mbox{\normalfont{l}}}
\newcommand{\half}{{\ensuremath{\frac{1}{2}}}}
\newcommand{\iso}{\cong}
\newcommand{\bhs}{\hspace{10pt}}  
\newcommand{\mhs}{\hspace{7.5pt}} 
\newcommand{\shs}{\hspace{5pt}}   
\newcommand{\abshs}{\hspace{4pt}}   
\newcommand{\leftsub}[2]{{\vphantom{#2}}_{#1}{#2}}
\newcommand{\nijenhuisboxcorolla}[3]{\ensuremath{
 \xygraph{
!{<0pt,0pt>;<4.5pt,0pt>:<0pt,4.5pt>::}
!{(-8,4)}="a"
!{(-6,4)}="ab"
!{(-4,4)}="ba"
!{(-2,4)}="b"
!{(2,4)}="c"
!{(4,4)}="cd"
!{(6,4)}="dc"
!{(8,4)}="d"
!{(-2,1)}="e"
!{(-1.5,1)}="ef"
!{(-1,1)}="fe"
!{(-0.5,1)}="f"
!{(0.5,1)}="g"
!{(1,1)}="gh"
!{(1.5,1)}="hg"
!{(2,1)}="h"
!{(0,0)}*{\scriptscriptstyle #3}
!{(-2,-1)}="i"
!{(0,-1)}="j"
!{(2,-1)}="k"
!{(0,-4)}="l"
!{(-5.1,6)}*{\overbrace{\hspace{27pt}}^{#1}}
!{(5.1,6)}*{\overbrace{\hspace{27pt}}^{#2}}
"a"-"e"
"ab"-"ef"
"ba"-"fe"
"b"-"f"
"c"-@{..}"g"
"cd"-@{..}"gh"
"dc"-@{..}"hg"
"d"-@{..}"h"
"e"-"h"
"e"-"i"
"h"-"k"
"i"-"k"
"j"-"l"
}
}}
\newcommand{\nijenhuissplitboxcorollaA}{\ensuremath{
 \xygraph{
!{<0pt,0pt>;<4.5pt,0pt>:<0pt,4.5pt>::}
!{(-7,7)}="a"
!{(-5,7)}="b"
!{(-3,7)}="c"
!{(-1,7)}="d"
!{(1.5,7)}="e"
!{(3,7)}="f"
!{(5,7)}="g"
!{(7,7)}="h"
!{(9,7)}="i"
!{(-1,4)}="j"
!{(-0.5,4)}="k"
!{(0,4)}="l"
!{(0.5,4)}="m"
!{(1.25,4)}="n"
!{(1.5,4)}="o"
!{(2,4)}="p"
!{(2.5,4)}="q"
!{(3,4)}="r"
!{(-1,2)}="s"
!{(1,2)}="t"
!{(3,2)}="u"
!{(-9,-1)}="A"
!{(-7,-1)}="B"
!{(-5,-1)}="C"
!{(-3,-1)}="D"
!{(0,-1)}="E"
!{(1,-1)}="F"
!{(3,-1)}="G"
!{(5,-1)}="H"
!{(7,-1)}="I"
!{(-3,-4)}="J"
!{(-2.5,-4)}="K"
!{(-2,-4)}="L"
!{(-1.5,-4)}="M"
!{(-0.75,-4)}="N"
!{(-0.5,-4)}="O"
!{(0,-4)}="P"
!{(0.5,-4)}="Q"
!{(1,-4)}="R"
!{(-3,-6)}="S"
!{(-1,-6)}="T"
!{(1,-6)}="U"
!{(-1,-9)}="V"
!{(-4.1,9)}*{\overbrace{\hspace{27pt}}^{I_2}}
!{(1.5,9)}*{\scriptstyle J_2}
!{(6.1,9)}*{\overbrace{\hspace{27pt}}^{J_3}}
!{(-6.1,1)}*{\overbrace{\hspace{27pt}}^{I_1}}
!{(4.1,1)}*{\overbrace{\hspace{27pt}}^{\hspace{4pt} J_1}}
!{(-1,-5)}*{\scriptstyle k_1}
!{(1,3)}*{\scriptstyle k_2}
"a"-"j"
"b"-"k"
"c"-"l"
"d"-"m"
"e"-"n"
"f"-@^{..}"o"
"g"-@^{..}"p"
"h"-@^{..}"q"
"i"-@^{..}"r"
"j"-"r"
"j"-"s"
"r"-"u"
"s"-"u"
"t"-"E"
"A"-"J"
"B"-"K"
"C"-"L"
"D"-"M"
"E"-@^{..}"N"
"F"-@^{..}"O"
"G"-@^{..}"P"
"H"-@^{..}"Q"
"I"-@^{..}"R"
"J"-"R"
"J"-"S"
"R"-"U"
"S"-"U"
"T"-"V"
}
}}
\newcommand{\nijenhuissplitboxcorollaB}{\ensuremath{
 \xygraph{
!{<0pt,0pt>;<-4.5pt,0pt>:<0pt,4.5pt>::}
!{(-7,7)}="a"
!{(-5,7)}="b"
!{(-3,7)}="c"
!{(-1,7)}="d"
!{(1.5,7)}="e"
!{(3,7)}="f"
!{(5,7)}="g"
!{(7,7)}="h"
!{(9,7)}="i"
!{(-1,4)}="j"
!{(-0.5,4)}="k"
!{(0,4)}="l"
!{(0.5,4)}="m"
!{(1.25,4)}="n"
!{(1.5,4)}="o"
!{(2,4)}="p"
!{(2.5,4)}="q"
!{(3,4)}="r"
!{(-1,2)}="s"
!{(1,2)}="t"
!{(3,2)}="u"
!{(-9,-1)}="A"
!{(-7,-1)}="B"
!{(-5,-1)}="C"
!{(-3,-1)}="D"
!{(0,-1)}="E"
!{(1,-1)}="F"
!{(3,-1)}="G"
!{(5,-1)}="H"
!{(7,-1)}="I"
!{(-3,-4)}="J"
!{(-2.5,-4)}="K"
!{(-2,-4)}="L"
!{(-1.5,-4)}="M"
!{(-0.75,-4)}="N"
!{(-0.5,-4)}="O"
!{(0,-4)}="P"
!{(0.5,-4)}="Q"
!{(1,-4)}="R"
!{(-3,-6)}="S"
!{(-1,-6)}="T"
!{(1,-6)}="U"
!{(-1,-9)}="V"
!{(-4.1,9)}*{\overbrace{\hspace{27pt}}^{J_2}}
!{(6.1,9)}*{\overbrace{\hspace{27pt}}^{I_2}}
!{(-6.1,1)}*{\overbrace{\hspace{27pt}}^{J_1}}
!{(4.1,1)}*{\overbrace{\hspace{27pt}}^{ I_1} \hspace{4pt}}
!{(-1,-5)}*{\scriptstyle k_1}
!{(1,3)}*{\scriptstyle k_2}
"a"-@^{..}"j"
"b"-@^{..}"k"
"c"-@^{..}"l"
"d"-@^{..}"m"
"f"-"o"
"g"-"p"
"h"-"q"
"i"-"r"
"j"-"r"
"j"-"s"
"r"-"u"
"s"-"u"
"t"-"E"
"A"-@^{..}"J"
"B"-@^{..}"K"
"C"-@^{..}"L"
"D"-@^{..}"M"
"E"-"N"
"F"-"O"
"G"-"P"
"H"-"Q"
"I"-"R"
"J"-"R"
"J"-"S"
"R"-"U"
"S"-"U"
"T"-"V"
}
}}
\newcommand{\PL}[2]{\ensuremath{
 \xygraph{
!{<0pt,0pt>;<4pt,0pt>:<0pt,4pt>::}
!{(-2,2.5)}*{\scriptscriptstyle #1}
!{(2,2.5)}*{\scriptscriptstyle #2}
!{(-2,1.5)}="a"
!{(2,1.5)}="b"
!{(0,-0.5)}*{\scriptscriptstyle{\circ}}="c"
!{(0,-2.5)}="d"
"a"-"c"
"b"-@^{..}"c"
"c"-"d"
}
}}
\newcommand{\PLop}[2]{\ensuremath{
 \xygraph{
!{<0pt,0pt>;<4pt,0pt>:<0pt,4pt>::}
!{(-2,2.5)}*{\scriptscriptstyle #1}
!{(2,2.5)}*{\scriptscriptstyle #2}
!{(-2,1.5)}="a"
!{(2,1.5)}="b"
!{(0,-0.5)}*{\scriptscriptstyle{\circ}}="c"
!{(0,-2.5)}="d"
"a"-@^{..}"c"
"b"-"c"
"c"-"d"
}
}}
\newcommand{\Ysmall}{\ensuremath{
 \xygraph{
!{<0pt,0pt>;<2.5pt,0pt>:<0pt,-2.5pt>::}
!{(0,1.5)}="a"
!{(0,-0.5)}="b"
!{(-2,-2.5)}="c"
!{(2,-2.5)}="d"
"a"-"b"
"b"-"c"
"b"-"d"
}
}}
\newcommand{\PLsmall}{\ensuremath{
 \xygraph{
!{<0pt,0pt>;<2.5pt,0pt>:<0pt,-2.5pt>::}
!{(0,1.5)}="a"
!{(0,-0.5)}="b"
!{(-2,-2.5)}="c"
!{(2,-2.5)}="d"
"a"-"b"
"b"-"c"
"b"-@^{..}"d"
}
}}
\newcommand{\PLopsmall}{\ensuremath{
 \xygraph{
!{<0pt,0pt>;<2.5pt,0pt>:<0pt,-2.5pt>::}
!{(0,1.5)}="a"
!{(0,-0.5)}="b"
!{(-2,-2.5)}="c"
!{(2,-2.5)}="d"
"a"-"b"
"b"-@^{..}"c"
"b"-"d"
}
}}
\newcommand{\PLWsmall}{\ensuremath{
 \xygraph{
!{<0pt,0pt>;<2.5pt,0pt>:<0pt,-2.5pt>::}
!{(0,1.5)}="a"
!{(0,-0.5)}*{\scriptscriptstyle{\circ}}="b"
!{(-2,-2.5)}="c"
!{(2,-2.5)}="d"
"a"-"b"
"b"-"c"
"b"-@^{..}"d"
}
}}
\newcommand{\PLWopsmall}{\ensuremath{
 \xygraph{
!{<0pt,0pt>;<2.5pt,0pt>:<0pt,-2.5pt>::}
!{(0,1.5)}="a"
!{(0,-0.5)}*{\scriptscriptstyle{\circ}}="b"
!{(-2,-2.5)}="c"
!{(2,-2.5)}="d"
"a"-"b"
"b"-@^{..}"c"
"b"-"d"
}
}}
\newcommand{\PLBsmall}{\ensuremath{
 \xygraph{
!{<0pt,0pt>;<2.5pt,0pt>:<0pt,-2.5pt>::}
!{(0,1.5)}="a"
!{(0,-0.5)}*{\scriptscriptstyle{\bullet}}="b"
!{(-2,-2.5)}="c"
!{(2,-2.5)}="d"
"a"-"b"
"b"-"c"
"b"-@^{..}"d"
}
}}
\newcommand{\PLBopsmall}{\ensuremath{
 \xygraph{
!{<0pt,0pt>;<2.5pt,0pt>:<0pt,-2.5pt>::}
!{(0,1.5)}="a"
!{(0,-0.5)}*{\scriptscriptstyle{\bullet}}="b"
!{(-2,-2.5)}="c"
!{(2,-2.5)}="d"
"a"-"b"
"b"-@^{..}"c"
"b"-"d"
}
}}
\newcommand{\XX}[5]{\ensuremath{
 \xygraph{
!{<0pt,0pt>;<4pt,0pt>:<0pt,4pt>::}
!{(-3,3.5)}*{\scriptscriptstyle #1}
!{(1,3.5)}*{\scriptscriptstyle #2}
!{(-3,2.5)}="a"
!{(1,2.5)}="b"
!{(3,1.5)}*{\scriptscriptstyle #3}
!{(-1,0.5)}*{\scriptscriptstyle #4}="c"
!{(3,0.5)}="d"
!{(1,-1.5)}*{\scriptscriptstyle #5}="e"
!{(1,-3.5)}="f"
"a"-"c"
"b"-"c"
"c"-"e"
"d"-"e"
"e"-"f"
}
}}
\newcommand{\YY}[3]{\XX{#1}{#2}{#3}{{}}{{}}}
\newcommand{\XXop}[5]{\ensuremath{
 \xygraph{
!{<0pt,0pt>;<-4pt,0pt>:<0pt,4pt>::}
!{(-3,3.5)}*{\scriptscriptstyle #3}
!{(1,3.5)}*{\scriptscriptstyle #2}
!{(-3,2.5)}="a"
!{(1,2.5)}="b"
!{(3,1.5)}*{\scriptscriptstyle #1}
!{(-1,0.5)}*{\scriptscriptstyle #4}="c"
!{(3,0.5)}="d"
!{(1,-1.5)}*{\scriptscriptstyle #5}="e"
!{(1,-3.5)}="f"
"a"-"c"
"b"-"c"
"c"-"e"
"d"-"e"
"e"-"f"
}
}}
\newcommand{\YYop}[3]{\XXop{#1}{#2}{#3}{{}}{{}}}
\newcommand{\PLXPLX}[5]{\ensuremath{
 \xygraph{
!{<0pt,0pt>;<4pt,0pt>:<0pt,4pt>::}
!{(-3,3.5)}*{\scriptscriptstyle #1}
!{(1,3.5)}*{\scriptscriptstyle #2}
!{(-3,2.5)}="a"
!{(1,2.5)}="b"
!{(3,1.5)}*{\scriptscriptstyle #3}
!{(-1,0.5)}*{\scriptscriptstyle #4}="c"
!{(3,0.5)}="d"
!{(1,-1.5)}*{\scriptscriptstyle #5}="e"
!{(1,-3.5)}="f"
"a"-"c"
"b"-@^{..}"c"
"c"-"e"
"d"-@^{..}"e"
"e"-"f"
}
}}
\newcommand{\PLYPLY}[3]{\PLXPLX{#1}{#2}{#3}{{}}{{}}}
\newcommand{\PLWPLW}[3]{\PLXPLX{#1}{#2}{#3}{\circ}{\circ}}
\newcommand{\PLWPLB}[3]{\PLXPLX{#1}{#2}{#3}{\circ}{\bullet}}
\newcommand{\PLBPLW}[3]{\PLXPLX{#1}{#2}{#3}{\bullet}{\circ}}
\newcommand{\PLBPLB}[3]{\PLXPLX{#1}{#2}{#3}{\bullet}{\bullet}}
\newcommand{\PLXPLXop}[5]{\ensuremath{
 \xygraph{
!{<0pt,0pt>;<-4pt,0pt>:<0pt,4pt>::}
!{(-3,3.5)}*{\scriptscriptstyle #3}
!{(1,3.5)}*{\scriptscriptstyle #2}
!{(-3,2.5)}="a"
!{(1,2.5)}="b"
!{(3,1.5)}*{\scriptscriptstyle #1}
!{(-1,0.5)}*{\scriptscriptstyle #4}="c"
!{(3,0.5)}="d"
!{(1,-1.5)}*{\scriptscriptstyle #5}="e"
!{(1,-3.5)}="f"
"a"-@^{..}"c"
"b"-"c"
"c"-@^{..}"e"
"d"-"e"
"e"-"f"
}
}}
\newcommand{\PLYPLYop}[3]{\PLXPLXop{#1}{#2}{#3}{{}}{{}}}
\newcommand{\PLWPLWop}[3]{\PLXPLXop{#1}{#2}{#3}{\circ}{\circ}}
\newcommand{\PLWPLBop}[3]{\PLXPLXop{#1}{#2}{#3}{\circ}{\bullet}}
\newcommand{\PLBPLWop}[3]{\PLXPLXop{#1}{#2}{#3}{\bullet}{\circ}}
\newcommand{\PLBPLBop}[3]{\PLXPLXop{#1}{#2}{#3}{\bullet}{\bullet}}
\newcommand{\PLXX}[5]{\ensuremath{
 \xygraph{
!{<0pt,0pt>;<4pt,0pt>:<0pt,4pt>::}
!{(-3,3.5)}*{\scriptscriptstyle #1}
!{(1,3.5)}*{\scriptscriptstyle #2}
!{(-3,2.5)}="a"
!{(1,2.5)}="b"
!{(3,1.5)}*{\scriptscriptstyle #3}
!{(-1,0.5)}*{\scriptscriptstyle #4}="c"
!{(3,0.5)}="d"
!{(1,-1.5)}*{\scriptscriptstyle #5}="e"
!{(1,-3.5)}="f"
"a"-"c"
"b"-@^{..}"c"
"c"-"e"
"d"-"e"
"e"-"f"
}
}}
\newcommand{\PLWY}[3]{\PLXX{#1}{#2}{#3}{\circ}{{}}}
\newcommand{\PLBY}[3]{\PLXX{#1}{#2}{#3}{\bullet}{{}}}
\newcommand{\PLBW}[3]{\PLXX{#1}{#2}{#3}{\bullet}{\circ}}
\newcommand{\PLXXop}[5]{\ensuremath{
 \xygraph{
!{<0pt,0pt>;<-4pt,0pt>:<0pt,4pt>::}
!{(-3,3.5)}*{\scriptscriptstyle #3}
!{(1,3.5)}*{\scriptscriptstyle #2}
!{(-3,2.5)}="a"
!{(1,2.5)}="b"
!{(3,1.5)}*{\scriptscriptstyle #1}
!{(-1,0.5)}*{\scriptscriptstyle #4}="c"
!{(3,0.5)}="d"
!{(1,-1.5)}*{\scriptscriptstyle #5}="e"
!{(1,-3.5)}="f"
"a"-@^{..}"c"
"b"-"c"
"c"-"e"
"d"-"e"
"e"-"f"
}
}}
\newcommand{\PLYYop}[3]{\PLXXop{#1}{#2}{#3}{{}}{{}}}
\newcommand{\PLWYop}[3]{\PLXXop{#1}{#2}{#3}{\circ}{{}}}
\newcommand{\PLBYop}[3]{\PLXXop{#1}{#2}{#3}{\bullet}{{}}}
\newcommand{\XPLX}[5]{\ensuremath{
 \xygraph{
!{<0pt,0pt>;<4pt,0pt>:<0pt,4pt>::}
!{(-3,3.5)}*{\scriptscriptstyle #1}
!{(1,3.5)}*{\scriptscriptstyle #2}
!{(-3,2.5)}="a"
!{(1,2.5)}="b"
!{(3,1.5)}*{\scriptscriptstyle #3}
!{(-1,0.5)}*{\scriptscriptstyle #4}="c"
!{(3,0.5)}="d"
!{(1,-1.5)}*{\scriptscriptstyle #5}="e"
!{(1,-3.5)}="f"
"a"-"c"
"b"-"c"
"c"-"e"
"d"-@^{..}"e"
"e"-"f"
}
}}
\newcommand{\YPLY}[3]{\XPLX{#1}{#2}{#3}{{}}{{}}}
\newcommand{\YPLW}[3]{\XPLX{#1}{#2}{#3}{{}}{\circ}}
\newcommand{\YPLB}[3]{\XPLX{#1}{#2}{#3}{{}}{\bullet}}
\newcommand{\XPLXop}[5]{\ensuremath{
 \xygraph{
!{<0pt,0pt>;<-4pt,0pt>:<0pt,4pt>::}
!{(-3,3.5)}*{\scriptscriptstyle #3}
!{(1,3.5)}*{\scriptscriptstyle #2}
!{(-3,2.5)}="a"
!{(1,2.5)}="b"
!{(3,1.5)}*{\scriptscriptstyle #1}
!{(-1,0.5)}*{\scriptscriptstyle #4}="c"
!{(3,0.5)}="d"
!{(1,-1.5)}*{\scriptscriptstyle #5}="e"
!{(1,-3.5)}="f"
"a"-"c"
"b"-"c"
"c"-@^{..}"e"
"d"-"e"
"e"-"f"
}
}}
\newcommand{\YPLYop}[3]{\XPLXop{#1}{#2}{#3}{{}}{{}}}
\newcommand{\YPLWop}[3]{\XPLXop{#1}{#2}{#3}{{}}{\circ}}
\newcommand{\YPLBop}[3]{\XPLXop{#1}{#2}{#3}{{}}{\bullet}}
\newcommand{\PLXtwX}[5]{\ensuremath{
 \xygraph{
!{<0pt,0pt>;<4pt,0pt>:<0pt,4pt>::}
!{(-3,3.5)}*{\scriptscriptstyle #1}
!{(1,3.5)}*{\scriptscriptstyle #2}
!{(-3,2.5)}="a"
!{(1,2.5)}="b"
!{(3,1.5)}*{\scriptscriptstyle #3}
!{(-1,0.5)}*{\scriptscriptstyle #4}="c"
!{(3,0.5)}="d"
!{(1,-1.5)}*{\scriptscriptstyle #5}="e"
!{(1,-3.5)}="f"
"a"-@^{..}"c"
"b"-"c"
"c"-"e"
"d"-"e"
"e"-"f"
}
}}
\newcommand{\PLYtwY}[3]{\PLXtwX{#1}{#2}{#3}{{}}{{}}}
\newcommand{\PLWtwY}[3]{\PLXtwX{#1}{#2}{#3}{\circ}{{}}}
\newcommand{\PLBtwY}[3]{\PLXtwX{#1}{#2}{#3}{\bullet}{{}}}
\newcommand{\PLXtwPLX}[5]{\ensuremath{
 \xygraph{
!{<0pt,0pt>;<4pt,0pt>:<0pt,4pt>::}
!{(-3,3.5)}*{\scriptscriptstyle #1}
!{(1,3.5)}*{\scriptscriptstyle #2}
!{(-3,2.5)}="a"
!{(1,2.5)}="b"
!{(3,1.5)}*{\scriptscriptstyle #3}
!{(-1,0.5)}*{\scriptscriptstyle #4}="c"
!{(3,0.5)}="d"
!{(1,-1.5)}*{\scriptscriptstyle #5}="e"
!{(1,-3.5)}="f"
"a"-@^{..}"c"
"b"-"c"
"c"-"e"
"d"-@^{..}"e"
"e"-"f"
}
}}
\newcommand{\PLYtwPLY}[3]{\PLXtwPLX{#1}{#2}{#3}{{}}{{}}}
\newcommand{\PLWtwPLW}[3]{\PLXtwPLX{#1}{#2}{#3}{\circ}{\circ}}
\newcommand{\PLBtwPLB}[3]{\PLXtwPLX{#1}{#2}{#3}{\bullet}{\bullet}}
\newcommand{\YWBcorolla}{\ensuremath{
 \xygraph{
!{<0pt,0pt>;<4pt,0pt>:<0pt,4pt>::}
!{(-7,7)}="a"
!{(-3,7)}="b"
!{(-5,5)}="c"
!{(-1,5)}="d"
!{(-4.5,4.5)}*{.}="cea"
!{(-4,4)*{.}}="ceb"
!{(-3.5,3.5)}*{.}="cec"
!{(-3,3)}="e"
!{(1,3)}="f"
!{(-1,1)}*{\scriptscriptstyle{\circ}}="g"
!{(-0.5,0.5)}*{.}="gha"
!{(0,0)}*{.}="ghb"
!{(0.5,-0.5)}*{.}="ghc"
!{(3,1)}="h"
!{(1,-1)}*{\scriptscriptstyle{\circ}}="i"
!{(5,-1)}="j"
!{(3,-3)}*{\scriptscriptstyle{\bullet}}="k"
!{(3.5,-3.5)}*{.}="kma"
!{(4,-4)}*{.}="kmb"
!{(4.5,-4.5)}*{.}="kmc"
!{(7,-3)}="l"
!{(5,-5)}*{\scriptscriptstyle{\bullet}}="m"
!{(5,-7)}="n"
!{(-7,8)}*{\scriptscriptstyle{\sigma(1)}}="A"
!{(-3,8)}*{\scriptscriptstyle{\sigma(2)}}="B"
!{(-1,6)}*{\scriptscriptstyle{\sigma(i)}}="D"
!{(2,4)}*{\scriptscriptstyle{\sigma(i+1})}="F"
!{(4,2)}*{\scriptscriptstyle{\sigma(i+j)}}="H"
!{(7,0)}*{\scriptscriptstyle{\sigma(i+j+1)}}="J"
!{(7,-2)}*{\scriptscriptstyle{\sigma(n)}}="L"
"a"-"c"
"b"-"c"
"d"-"e"
"e"-"g"
"f"-@^{..}"g"
"h"-@^{..}"i"
"i"-"k"
"j"-@^{..}"k"
"l"-@^{..}"m"
"m"-"n"
}
}}
\newcommand{\YWBcorollab}{\ensuremath{
 \xygraph{
!{<0pt,0pt>;<4pt,0pt>:<0pt,4pt>::}
!{(-7,7)}="a"
!{(-3,7)}="b"
!{(-5,5)}="c"
!{(-1,5)}="d"
!{(-4.5,4.5)}*{.}="cea"
!{(-4,4)*{.}}="ceb"
!{(-3.5,3.5)}*{.}="cec"
!{(-3,3)}="e"
!{(1,3)}="f"
!{(-1,1)}*{\scriptscriptstyle{\circ}}="g"
!{(-0.5,0.5)}*{.}="gha"
!{(0,0)}*{.}="ghb"
!{(0.5,-0.5)}*{.}="ghc"
!{(3,1)}="h"
!{(1,-1)}*{\scriptscriptstyle{\circ}}="i"
!{(5,-1)}="j"
!{(3,-3)}*{\scriptscriptstyle{\bullet}}="k"
!{(3.5,-3.5)}*{.}="kma"
!{(4,-4)}*{.}="kmb"
!{(4.5,-4.5)}*{.}="kmc"
!{(7,-3)}="l"
!{(5,-5)}*{\scriptscriptstyle{\bullet}}="m"
!{(5,-7)}="n"
!{(-7,8)}*{\scriptscriptstyle{\sigma(1)}}="A"
!{(-3,8)}*{\scriptscriptstyle{\sigma(2)}}="B"
!{(-1,6)}*{\scriptscriptstyle{\sigma(i)}}="D"
!{(2,4)}*{\scriptscriptstyle{\sigma(i+1})}="F"
!{(4,2)}*{\scriptscriptstyle{\sigma(i+k)}}="H"
!{(7,0)}*{\scriptscriptstyle{\sigma(i+k+1)}}="J"
!{(7,-2)}*{\scriptscriptstyle{\sigma(n)}}="L"
"a"-"c"
"b"-"c"
"d"-"e"
"e"-"g"
"f"-@^{..}"g"
"h"-@^{..}"i"
"i"-"k"
"j"-@^{..}"k"
"l"-@^{..}"m"
"m"-"n"
}
}}
\newcommand{\nijPBWa}{\ensuremath{
 \xygraph{
!{<0pt,0pt>;<4pt,0pt>:<0pt,4pt>::}
!{(-6,6)}="a"
!{(-2,6)}="b"
!{(-4,4)}="c"
!{(-3.5,3.5)}*{.}="cda"
!{(-3,3)*{.}}="cdb"
!{(-2.5,2.5)}*{.}="cdc"
!{(-2,2)}="da"
!{(2,2)}="db"
!{(0,0)}="e"
!{(4,0)}="f"
!{(2,-2)}="g"
!{(2.5,-2.5)}*{.}="gia"
!{(3,-3)}*{.}="gib"
!{(3.5,-3.5)}*{.}="gic"
!{(4,-4)}="ha"
!{(8,-4)}="hb"
!{(6,-6)}="i"
!{(6,-8)}="j"
!{(-6,7)}*{\scriptscriptstyle{1}}="A"
!{(-2,7)}*{\scriptscriptstyle{i_1}}="B"
!{(2,3)}*{\scriptscriptstyle�{i_r}}="DB"
!{(4,1)}*{\scriptscriptstyle{j_1}}="F"
!{(8,-3)}*{\scriptscriptstyle{j_s}}="H"
"a"-"c"
"b"-"c"
"da"-"e"
"db"-"e"
"e"-"g"
"f"-@^{..}"g"
"ha"-"i"
"hb"-@^{..}"i"
"i"-"j"
}
}}
\newcommand{\nijPBWb}{\ensuremath{
 \xygraph{
!{<0pt,0pt>;<4pt,0pt>:<0pt,4pt>::}
!{(-8,8)}="y"
!{(-4,8)}="z"
!{(-6,6)}="a"
!{(-2,6)}="b"
!{(-4,4)}="c"
!{(-3.5,3.5)}*{.}="cda"
!{(-3,3)*{.}}="cdb"
!{(-2.5,2.5)}*{.}="cdc"
!{(-2,2)}="da"
!{(2,2)}="db"
!{(0,0)}="e"
!{(4,0)}="f"
!{(2,-2)}="g"
!{(2.5,-2.5)}*{.}="gia"
!{(3,-3)}*{.}="gib"
!{(3.5,-3.5)}*{.}="gic"
!{(4,-4)}="ha"
!{(8,-4)}="hb"
!{(6,-6)}="i"
!{(6,-8)}="j"
!{(-8,9)}*{\scriptscriptstyle{1}}="Y"
!{(-4,9)}*{\scriptscriptstyle{i_1}}="Z"
!{(-2,7)}*{\scriptscriptstyle{i_2}}="B"
!{(2,3)}*{\scriptscriptstyle�{i_r}}="D"
!{(4,1)}*{\scriptscriptstyle{j_1}}="F"
!{(8,-3)}*{\scriptscriptstyle{j_s}}="H"
"y"-@^{..}"a"
"z"-"a"
"a"-"c"
"b"-"c"
"da"-"e"
"db"-"e"
"e"-"g"
"f"-@^{..}"g"
"ha"-"i"
"hb"-@^{..}"i"
"i"-"j"
}
}}
\newcommand{\binijPBWa}{\ensuremath{
 \xygraph{
!{<0pt,0pt>;<4pt,0pt>:<0pt,4pt>::}
!{(-7,7)}="a"
!{(-3,7)}="b"
!{(-5,5)}="c"
!{(-1,5)}="d"
!{(-4.5,4.5)}*{.}="cea"
!{(-4,4)*{.}}="ceb"
!{(-3.5,3.5)}*{.}="cec"
!{(-3,3)}="e"
!{(1,3)}="f"
!{(-1,1)}*{\scriptscriptstyle{\circ}}="g"
!{(-0.5,0.5)}*{.}="gha"
!{(0,0)}*{.}="ghb"
!{(0.5,-0.5)}*{.}="ghc"
!{(3,1)}="h"
!{(1,-1)}*{\scriptscriptstyle{\circ}}="i"
!{(5,-1)}="j"
!{(3,-3)}*{\scriptscriptstyle{\bullet}}="k"
!{(3.5,-3.5)}*{.}="kma"
!{(4,-4)}*{.}="kmb"
!{(4.5,-4.5)}*{.}="kmc"
!{(7,-3)}="l"
!{(5,-5)}*{\scriptscriptstyle{\bullet}}="m"
!{(5,-7)}="n"
!{(-7,8)}*{\scriptscriptstyle{1}}="A"
!{(-3,8)}*{\scriptscriptstyle{i_1}}="B"
!{(-1,6)}*{\scriptscriptstyle{i_r}}="D"
!{(2,4)}*{\scriptscriptstyle{j_1}}="F"
!{(4,2)}*{\scriptscriptstyle{j_s}}="H"
!{(7,0)}*{\scriptscriptstyle{j_{s+1}}}="J"
!{(8,-2)}*{\scriptscriptstyle{j_{s+t}}}="L"
"a"-"c"
"b"-"c"
"d"-"e"
"e"-"g"
"f"-@^{..}"g"
"h"-@^{..}"i"
"i"-"k"
"j"-@^{..}"k"
"l"-@^{..}"m"
"m"-"n"
}
}}
\newcommand{\binijPBWb}{\ensuremath{
 \xygraph{
!{<0pt,0pt>;<4pt,0pt>:<0pt,4pt>::}
!{(-9,9)}="x"
!{(-5,9)}="y"
!{(-7,7)}*{\scriptscriptstyle{\circ}}="a"
!{(-3,7)}="b"
!{(-5,5)}="c"
!{(-1,5)}="d"
!{(-4.5,4.5)}*{.}="cea"
!{(-4,4)*{.}}="ceb"
!{(-3.5,3.5)}*{.}="cec"
!{(-3,3)}="e"
!{(1,3)}="f"
!{(-1,1)}*{\scriptscriptstyle{\circ}}="g"
!{(-0.5,0.5)}*{.}="gha"
!{(0,0)}*{.}="ghb"
!{(0.5,-0.5)}*{.}="ghc"
!{(3,1)}="h"
!{(1,-1)}*{\scriptscriptstyle{\circ}}="i"
!{(5,-1)}="j"
!{(3,-3)}*{\scriptscriptstyle{\bullet}}="k"
!{(3.5,-3.5)}*{.}="kma"
!{(4,-4)}*{.}="kmb"
!{(4.5,-4.5)}*{.}="kmc"
!{(7,-3)}="l"
!{(5,-5)}*{\scriptscriptstyle{\bullet}}="m"
!{(5,-7)}="n"
!{(-9,10)}*{\scriptscriptstyle{1}}="X"
!{(-5,10)}*{\scriptscriptstyle{i_1}}="Y"
!{(-3,8)}*{\scriptscriptstyle{i_2}}="B"
!{(-1,6)}*{\scriptscriptstyle{i_r}}="D"
!{(2,4)}*{\scriptscriptstyle{j_1}}="F"
!{(4,2)}*{\scriptscriptstyle{j_s}}="H"
!{(7,0)}*{\scriptscriptstyle{j_{s+1}}}="J"
!{(8,-2)}*{\scriptscriptstyle{j_{s+t}}}="L"
"x"-@^{..}"a"
"y"-"a"
"a"-"c"
"b"-"c"
"d"-"e"
"e"-"g"
"f"-@^{..}"g"
"h"-@^{..}"i"
"i"-"k"
"j"-@^{..}"k"
"l"-@^{..}"m"
"m"-"n"
}
}}
\newcommand{\binijPBWc}{\ensuremath{
 \xygraph{
!{<0pt,0pt>;<4pt,0pt>:<0pt,4pt>::}
!{(-6,6)}="y"
!{(-2,6)}="z"
!{(-4,4)}*{\scriptscriptstyle{\bullet}}="a"
!{(0,4)}="b"
!{(-2,2)}="c"
!{(2,2)}="d"
!{(-1.5,1.5)}*{.}="cea"
!{(-1,1)*{.}}="ceb"
!{(-0.5,0.5)}*{.}="cec"
!{(0,0)}="e"
!{(4,0)}="f"
!{(2,-2)}*{\scriptscriptstyle{\bullet}}="g"
!{(6,-2)}="h"
!{(2.5,-2.5)}*{.}="gia"
!{(3,-3)}*{.}="gib"
!{(3.5,-3.5)}*{.}="gic"
!{(4,-4)}*{\scriptscriptstyle{\bullet}}="i"
!{(4,-6)}="j"
!{(-6,7)}*{\scriptscriptstyle{1}}="Y"
!{(-2,7)}*{\scriptscriptstyle{i_1}}="Z"
!{(0,5)}*{\scriptscriptstyle{i_2}}="B"
!{(2,3)}*{\scriptscriptstyle�{i_r}}="D"
!{(4,1)}*{\scriptscriptstyle{j_1}}="F"
!{(6,-1)}*{\scriptscriptstyle{j_s}}="H"
"y"-@^{..}"a"
"z"-"a"
"a"-"c"
"b"-"c"
"d"-"e"
"e"-"g"
"f"-@^{..}"g"
"h"-@^{..}"i"
"i"-"j"
}
}}
\begin{document}

\begin{abstract}
Recently S.~Merkulov \cite{Merkulov2004a,Merkulov2005,Merkulov2006} established a new link between differential geometry and homological algebra by giving descriptions of several differential geometric structures in terms of algebraic operads and props. In particular he described Nijenhuis structures as corresponding to representations of the cobar construction on the Koszul dual of a certain quadratic operad. In this paper we prove, using the PBW-basis method of E.~Hoffbeck \cite{Hoffbeck2008}, that the operad governing Nijenhuis structures is Koszul, thereby showing that Nijenhuis structures correspond to representations of the minimal resolution of this operad. We also construct an operad such that representations of its minimal resolution in a vector space $V$ are in one-to-one correspondence with pairs of compatible Nijenhuis structures on the formal manifold associated to $V$.
\end{abstract}

\maketitle


\section*{Introduction}

The language of operads and props has since its renaissance in the 90's come to find its way into many fields of mathematics. A recent discovery is that several differential geometric structures can be translated to this language. In \cite{Kontsevich2003} M.~Kontsevich showed that homological vector fields on a formal manifold $V$ correspond to representations in $V$ of the minimal resolution of the operad of Lie algebras. In the papers \cite{Merkulov2004a}, \cite{Merkulov2005}, and \cite{Merkulov2006} S.~Merkulov gave operadic and propic descriptions, called operad and prop profiles, of Hertling-Manin, Nijenhuis, and Poisson structures, respectively. The common characteristic of all these structures is that they can be defined as Maurer-Cartan elements of certain Lie algebras. On the operadic, or propic, side the Maurer-Cartan equations are encoded by quadratic differentials of minimal resolutions of certain operads, or props. This translation has been useful e.g.~in that Merkulov in \cite{Merkulov2008,Merkulov2008a}  gave a propic formulation of the deformation quantization of Poisson structures. In this context wheels (directed cycles) had to be introduced which translates to a condition on traces on the geometric side.

The dictionary between differential geometry and homological algebra was further enlarged in \cite{Strohmayer2008} with the prop profile of bi-Hamiltonian structures, i.e.~pairs of compatible Poisson structures. Representations of this prop in a vector space $V$ correspond to Maurer-Cartan elements in a Lie subalgebra of $\wedge^\bullet\Tcal_V\llbracket\hslash\rrbracket$, where $\hslash$ is a formal parameter.

The vector field valued differential forms $\vfms$ of a manifold $V$ together with the Fr\"olicher-Nijenhuis bracket $\FN{\_}{\_}$ comprise a graded Lie algebra \cite{Nijenhuis1955}. A Nijenhuis structure is a Maurer-Cartan element of this Lie algebra, i.e.~an element $J\in\vfmsi{1}$ such that $\FN{J}{J}=0$. In \cite{Merkulov2005} S.~Merkulov defined a quadratic operad $\Nijenhuis$ such that representations of $\Omega(\Nijenhuis^{\antishriek})$, the cobar construction on the Koszul dual cooperad, in a vector space $V$ correspond to Nijenhuis structures on the formal manifold associated to $V$. If an operad $\Pcal$ is Koszul, the cobar construction on its Koszul dual is a minimal resolution of the operad, denoted by $\Pcal_\infty$. The operad $\Nijenhuis$ consists of the Lie operad and the pre-Lie operad with the Lie bracket and pre-Lie product differing by one in degree and compatible in a certain sense. Until recently the available methods have not been sufficient to prove the Koszulness of $\Nijenhuis$; e.g.~the compatibility relation of the operations does not define a distributive law and the operad does not come from a set theoretic operad, thus neither the methods of \cite{Markl1996} nor \cite{Vallette2007} are applicable. Using the method of PBW-bases for operads, introduced by E.~Hoffbeck in \cite{Hoffbeck2008}, we prove that $\Nijenhuis$ is Koszul. Thereby we obtain the following result:

\begin{thmalpha}
\label{nongradednijenhuisthm}
 There is a one-to-one correspondence between representations of $\Nijenhuis_\infty$ in $\R^n$ and formal Nijenhuis structures on $\R^n$ vanishing at the origin.
\end{thmalpha}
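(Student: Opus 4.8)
The plan is to obtain Theorem~\ref{nongradednijenhuisthm} as a corollary of two facts. The first is already available: by Merkulov \cite{Merkulov2005}, representations in a vector space $V$ of the cobar construction $\Omega(\Nijenhuis^{\antishriek})$ on the Koszul dual cooperad are in one-to-one correspondence with formal Nijenhuis structures, vanishing at the origin, on the formal manifold associated to $V$; here the quadratic part of the cobar codifferential encodes exactly the Maurer--Cartan equation $\FN{J}{J}=0$ in $\vfms$. The second fact, which is the technical core of the paper, is that the quadratic operad $\Nijenhuis$ is Koszul. Granting it, $\Omega(\Nijenhuis^{\antishriek})$ is a minimal resolution of $\Nijenhuis$, i.e.~it is $\Nijenhuis_\infty$, and specializing $V=\R^n$ gives the claimed bijection. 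So the whole content is the Koszulness of $\Nijenhuis$.

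To prove that, I would use Hoffbeck's PBW-basis criterion \cite{Hoffbeck2008}: a quadratic operad is Koszul whenever its ideal of relations admits a quadratic Gr\"obner basis for some admissible ordering of the tree monomials --- equivalently, whenever the reduced (normal) tree monomials form a linear basis of the operad. Concretely: (1) start from Merkulov's quadratic presentation of $\Nijenhuis$, with two binary generators of (internal) degrees differing by one --- satisfying, respectively, a Lie-type and a (right) pre-Lie-type relation, together with the mixed compatibility relation between them; (2) fix an admissible order on the planar binary trees with two internal vertices, each vertex decorated by one of the generators and a labelling of its inputs; (3) read off, for this order, the leading tree monomial of each defining relation; and (4) enumerate the critical monomials --- tree monomials with three internal vertices that arise as leading term from two inequivalent one-step rewritings --- and check that every such overlap reduces to a common value. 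By Hoffbeck's theorem this confluence forces the reduced monomials to be a basis, with the expected arity-wise dimensions, hence $\Nijenhuis$ Koszul.

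The principal obstacle is step~(4). As recalled in the introduction, the compatibility relation is not a distributive law and $\Nijenhuis$ is not the linearization of a set-theoretic operad --- which is precisely why the criteria of \cite{Markl1996} and of \cite{Vallette2007} fail --- so confluence of the mixed critical pairs cannot be inherited from a general theorem and must be verified by hand. I expect the bookkeeping to be the delicate part: one has to push the reductions through the Koszul sign rule while tracking the degree shift between the two operations, and to split into a modest number of cases according to which generators occupy the three vertices and how the chosen order resolves the overlap. To keep this tractable I would first exhibit PBW bases for the suboperad generated by the bracket and for the one generated by the pre-Lie product separately (for the pre-Lie part this is classical), then describe the candidate basis of $\Nijenhuis$ as monomials of one type grafted below monomials of the other (whichever the order favours), so that the only genuinely new work is the confluence of the critical pairs produced by the compatibility relation. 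Once such a PBW basis is in hand, Koszulness follows, and with it Theorem~\ref{nongradednijenhuisthm}.
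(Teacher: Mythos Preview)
Your overall architecture is correct and matches the paper's: reduce Theorem~\ref{nongradednijenhuisthm} to Merkulov's correspondence (Theorem~\ref{Merkulovsthm}) plus the Koszulness of $\Nijenhuis$, and establish the latter via Hoffbeck's PBW criterion. The one substantive divergence is \emph{where} you apply the PBW method. You propose to work directly with $\Nijenhuis$, whose suboperads are $\Lieone$ and $\Prelie$; the paper instead passes to the Koszul dual operad $\Nijenhuis^{!}$, whose suboperads are $\Com$ and $\Perm$, exhibits an explicit PBW basis for $\Nijenhuis^{!}$ with respect to a suitable order on the generators, and then transfers Koszulness back to $\Nijenhuis$. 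This buys a lot: $\Perm(n)$ has dimension $n$ and $\Com(n)$ dimension $1$, so the candidate basis of $\Nijenhuis^{!}(n)$ consists of simple left combs admitting a closed description, and condition~(ii) of Hoffbeck's definition is checked on a handful of two-vertex trees. By contrast $\Prelie(n)$ has dimension $n^{n-1}$, and your proposed description of a basis of $\Nijenhuis$ as ``monomials of one type grafted below monomials of the other'' is precisely the shape a distributive law would guarantee --- the very hypothesis you correctly note fails here --- so the actual normal forms, and the set of mixed overlaps to resolve, will be messier than you anticipate. Your route is not wrong in principle, but it is the harder of the two, and the paper's passage to the dual is the trick that makes the verification short.
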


We say that two Nijenhuis structures $J$ and $K$ are compatible if their sum is again a Nijenhuis structure. We call such a pair a bi-Nijenhuis structure. In this paper we show that bi-Nijenhuis structures can be derived from a rather simple algebraic structure. This structure consists of a Lie bracket and two pre-Lie products differing by one in degree with respect to the Lie bracket. The pre-Lie products are compatible in the sense that their sum again is a pre-Lie product and each of them is compatible with the Lie bracket in the sense that they form a $\Nijenhuis$ algebra. We denote the operad encoding such structures $\Binijenhuis$. Again using the PBW-basis method of Hoffbeck we show that $\Binijenhuis$ is Koszul, making it possible to prove the following:

\begin{thmalpha}
\label{introthmnongraded}
 There is a one-to-one correspondence between representations of $\Binijenhuis_\infty$ in $\R^n$ and formal bi-Nijenhuis structures on  $\R^n$ vanishing at the origin.
\end{thmalpha}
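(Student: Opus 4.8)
The plan is to mirror the proof of Theorem~\ref{nongradednijenhuisthm}, reducing the statement to the Koszulness of $\Binijenhuis$. Since $\Binijenhuis$ is by construction a quadratic operad, the cobar construction $\Omega(\Binijenhuis^{\antishriek})$ on its Koszul dual cooperad is defined; if $\Binijenhuis$ is Koszul this cobar construction is the minimal resolution $\Binijenhuis_\infty$, and then, by the general correspondence between representations of a cobar construction and Maurer-Cartan elements of the associated convolution Lie algebra, a representation of $\Binijenhuis_\infty$ in $V$ is the same datum as a Maurer-Cartan element of the deformation complex $\Hom(\Binijenhuis^{\antishriek},\End_V)$. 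The proof thus divides into three tasks: (i)~show that $\Binijenhuis$ is Koszul; (ii)~identify the convolution Lie algebra above with a geometric Lie algebra built from $\vfmsh$; (iii)~match its Maurer-Cartan elements with bi-Nijenhuis structures and deduce the stated correspondence over $\R^n$.

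For (i) I would invoke Hoffbeck's PBW-basis criterion \cite{Hoffbeck2008}, proceeding exactly as in the proof that $\Nijenhuis$ is Koszul. Present $\Binijenhuis$ by the generators $\nu$ (the antisymmetric Lie bracket), $\lambda_1$ and $\lambda_2$ (the two pre-Lie products), subject to: the Jacobi relation for $\nu$; the pre-Lie relation for each of $\lambda_1$, $\lambda_2$ and $\lambda_1+\lambda_2$ (the last being, modulo the first two, a mixed pre-Lie relation coupling $\lambda_1$ with $\lambda_2$); and the Nijenhuis-compatibility relation between $\nu$ and each of $\lambda_1$, $\lambda_2$. One then orders the arity-two generators and extends this to an order on tree monomials refining the one used for $\Nijenhuis$, with $\lambda_2$ slotted in consistently, reads off the leading tree monomials of the defining relations, and checks that they form a quadratic Gr\"obner basis --- i.e.\ that every ambiguity among tree monomials of weight three is confluent --- or, equivalently, exhibits the resulting set of admissible monomials and verifies arity by arity that its cardinality equals $\dim\Binijenhuis(n)$. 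This confluence verification is the main obstacle: the extra generator $\lambda_2$ roughly doubles the bookkeeping and produces new overlaps through the mixed pre-Lie relation, but each new overlap is parallel either to one already resolved for $\Nijenhuis$ or to one resolved for $\Prelie$, so the check stays finite and essentially case-by-case.

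Granting Koszulness, for (ii) and (iii) I would compute $\Binijenhuis^{\antishriek}$ and describe the generating operations of $\Omega(\Binijenhuis^{\antishriek})$ as corollas whose vertices carry the Lie-type decoration and the two pre-Lie-type decorations, as in the corolla pictures used for $\Nijenhuis$, with the $\lambda_2$-decorations recorded by a formal parameter $\hslash$ in the manner of the prop profile of bi-Hamiltonian structures \cite{Strohmayer2008}. The components of a representation in $V$ then assemble into a pair $J,K\in\vfmsi{1}$, equivalently a single element $J+\hslash K$ of a Lie subalgebra $\gfrak\subset\vfmsh$, and the structure equation of $\Binijenhuis_\infty$ becomes the Maurer-Cartan equation $\FNh{J+\hslash K}{J+\hslash K}=0$ for the $\hslash$-deformed Fr\"olicher-Nijenhuis bracket. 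Expanding by powers of $\hslash$ yields $\FN{J}{J}=0$, $\FN{J}{K}=0$ and $\FN{K}{K}=0$; since the middle identity follows from the other two together with $\FN{J+K}{J+K}=0$, this says precisely that $J$, $K$ and $J+K$ are all Nijenhuis, i.e.\ that $(J,K)$ is a bi-Nijenhuis structure. Finally, the passage to the stated correspondence over $\R^n$, with $J$ and $K$ vanishing at the origin, is obtained by applying to each of the two structures the same regrading and normalization argument used in the proof of Theorem~\ref{nongradednijenhuisthm}.
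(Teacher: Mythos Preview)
Your overall strategy---Koszulness via Hoffbeck's criterion, then identification of the convolution Lie algebra with a geometric one built from $\vfmsh$---is the paper's, but there is one substantive technical difference. In step~(i) you propose to construct a PBW-basis for $\Binijenhuis$ itself; the paper instead computes the Koszul dual operad $\Binijenhuis^{!}$ (whose suboperads are $\Com$ and two copies of $\Perm$) and exhibits an explicit PBW-basis for $\Binijenhuis^{!}$ with the order $\PLWopsmall<\PLBopsmall<\Ysmall<\PLWsmall<\PLBsmall$. This buys real simplification: the relations in $\Binijenhuis^{!}$ are each two- or three-term, so the two-vertex check is nearly immediate and the basis elements are explicit combs. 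Attacking $\Binijenhuis$ directly, as you suggest, means confronting the five-term Nijenhuis compatibility relations and the eight-term mixed pre-Lie relation; the confluence check may well go through but is heavier. (A small slip: the Lie-type generator $\nu$ has degree~$1$ and is \emph{symmetric}, not antisymmetric---this matters for the signs.) For steps~(ii)--(iii), the paper first proves the graded statement (Theorem~\ref{introthmgraded}) by assembling a representation into the full power series $\Gamma=\sum_{k\ge 0}\leftsub{k}{\Gamma}\hslash^k$ with $\leftsub{k}{\Gamma}\in\vfmsi{\ge k}$ and showing $\rho\delta=d\rho\iff\FNh{\Gamma}{\Gamma}=0$, and only then specializes to $V=\R^n$, where all components with $i+j\ge 3$ or $j\ge 2$ vanish for degree reasons and $\Gamma$ collapses to $J+\hslash K$. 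Your compressed account (``assemble into a pair $J,K$'') is the correct endpoint but skips this intermediate graded correspondence, which is what actually certifies that no spurious higher relations survive.
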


Considering representations in arbitrary graded manifolds we obtain the following generalization:

\begin{thmalpha}
\label{introthmgraded}
  There is a one-to-one correspondence between representations of $\Binijenhuis_\infty$ in a graded vector space $V$ and formal power series 
\[\Gamma=\sum_k \Gamma_k\hslash^k\in\vfmsh\] 
satisfying the conditions
\begin{enumerate}
 \item $\Gamma_k\in\vfmsi{\geq k}$,
 \item $|\Gamma|=1$,
 \item $\FN{\Gamma}{\Gamma}=0$,
 \item $\Gamma|_0=0$.
\end{enumerate}
\end{thmalpha}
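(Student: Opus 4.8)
The plan is to deduce the theorem from the Koszulness of $\Binijenhuis$ established above, via the operadic bar--cobar formalism together with an extra filtration, in the spirit of Merkulov's treatment of Nijenhuis structures~\cite{Merkulov2005} and the author's treatment of bi-Hamiltonian structures~\cite{Strohmayer2008}. Since $\Binijenhuis$ is Koszul, $\Binijenhuis_\infty = \Omega(\Binijenhuis^{\antishriek})$ is a minimal resolution, and, up to the appropriate degree shifts, a representation of $\Omega(\Binijenhuis^{\antishriek})$ in a graded vector space $V$ is the same datum as a Maurer--Cartan element of the convolution dg Lie algebra $\gfrak_V := \Hom_\s(\Binijenhuis^{\antishriek},\End_V)$; since both $\Binijenhuis^{\antishriek}$ and $\End_V$ carry the zero differential, the Maurer--Cartan equation reduces to $\half[\Gamma,\Gamma]=0$. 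Thus I would reduce the theorem to (a) an explicit description of $\gfrak_V$ as a graded Lie algebra built from $\vfms$, and (b) the translation of conditions (i)--(iv).

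For step (a): the operad $\Binijenhuis$ is generated by a Lie bracket $\lambda$ and two pre-Lie products $\nu_0,\nu_1$, the products and the bracket differing by one in degree, so its Koszul dual cooperad $\Binijenhuis^{\antishriek}$ is bigraded by arity and by the number $k$ of occurrences of the second pre-Lie cogenerator $\nu_1$ in a tree monomial. I would place the summand of $\gfrak_V$ of $\nu_1$-degree $k$ in the coefficient of $\hslash^k$ and, using the description of the $\Nijenhuis$-part of the convolution complex that underlies Theorem~\ref{nongradednijenhuisthm}, identify $\gfrak_V$ with the Lie subalgebra of $(\vfmsh,\FNh{\_}{\_})$ consisting of series $\Gamma = \sum_k \Gamma_k\hslash^k$ with $\Gamma_k \in \vfmsi{\geq k}$ --- the form-degree bound $\geq k$ being the reflection, under this dictionary, of the $k$ copies of $\nu_1$, exactly as the bound $\Gamma_k \in \wedge^{\geq k}\Tcal_V$ arose in the bi-Hamiltonian case. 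Here $\FNh{\_}{\_}$ is the $\hslash$-linear extension of the Fr\"olicher--Nijenhuis bracket $\FN{\_}{\_}$; the set of such $\Gamma$ is indeed a Lie subalgebra, as form degrees add under $\FN{\_}{\_}$. The substance of this step is the verification that the convolution bracket on $\gfrak_V$ is precisely $\FNh{\_}{\_}$ and that the image of $\gfrak_V$ is all of the stated subalgebra.

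For step (b): under the identification of step (a), a degree-one element of $\gfrak_V$ is exactly a series satisfying (i) and (ii); the Maurer--Cartan equation becomes $\FN{\Gamma}{\Gamma}=0$, which is (iii); and (iv) reflects the absence of nullary operations in $\Binijenhuis_\infty$, so that the associated field on the formal manifold has vanishing zeroth Taylor coefficient, i.e. vanishes at the origin. Conversely, any such $\Gamma$ evidently defines an operad morphism $\Omega(\Binijenhuis^{\antishriek})\to\End_V$. I would close by observing that specialising to $V$ concentrated in degree zero forces $\Gamma_k=0$ for $k\geq 2$ and $\Gamma_0,\Gamma_1 \in \vfmsi{1}$, with (iii) splitting coefficientwise into $\FN{\Gamma_0}{\Gamma_0}=\FN{\Gamma_0}{\Gamma_1}=\FN{\Gamma_1}{\Gamma_1}=0$, i.e. into the statement that $\Gamma_0$, $\Gamma_1$ and $\Gamma_0+\Gamma_1$ are Nijenhuis structures; this recovers Theorem~\ref{introthmnongraded}.

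The hard part will be step (a): obtaining a description of $\Binijenhuis^{\antishriek}$ that is concrete enough to see simultaneously that the convolution bracket is the extended Fr\"olicher--Nijenhuis bracket and that the $\nu_1$-count matches the form degree, so that the image of $\gfrak_V$ is exactly $\{\Gamma = \sum_k\Gamma_k\hslash^k : \Gamma_k \in \vfmsi{\geq k}\}$. This is a tree-by-tree bookkeeping parallel to, but heavier than, the one in Merkulov's Nijenhuis paper, and it is where the Koszulness proved above enters: it ensures that $\Omega(\Binijenhuis^{\antishriek})$ is a (minimal) resolution of $\Binijenhuis$, so that this cobar construction is the right object whose representations to study and that no higher constraints are missed.
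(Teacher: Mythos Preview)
Your proposal is correct and follows essentially the same route as the paper: the paper first sets up the explicit dictionary between families $\{\leftsub{k}{\mu}_{i,j}\}$ and power series $\Gamma\in\vfmsh$ and checks directly that $\rho\delta=d\rho$ translates into $\FNh{\Gamma}{\Gamma}=0$, and then repackages this as the statement that the convolution dg Lie algebra $\Lcal_{\Binijenhuis}(V)=\Hom_\s(\Binijenhuis^{\antishriek},\End_V)$ is isomorphic to the geometric Lie subalgebra $\tilde{\gfrak}_V\subset(\vfmsh,\FNh{\_}{\_})$, which is exactly your step~(a). The only cosmetic difference is that the paper indexes the $\hslash$-grading by the number of \emph{white} pre-Lie generators rather than your $\nu_1$, and it carries along the dg structure on $V$ (absorbing $d$ into the linear part $\leftsub{0}{\Gamma}_{1,0}$ of $\Gamma$), so your remark that $\End_V$ has zero differential should be read in the purely graded case.
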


The paper is organized as follows. In Section \ref{Nijenhuisgeom} we review the geometry of Nijenhuis structures. In Section \ref{extraction} we recall Merkulov's operad profile of Nijenhuis structures and define the operad encoding bi-Nijenhuis structures. In Section \ref{resolution}, we review Hoffbeck's notion of PBW-bases for operads and use this to show that the operads $\Nijenhuis$ and $\Binijenhuis$, of Nijenhuis and compatible Nijenhuis structures, respectively, are Koszul. Thus we prove Theorem \ref{nongradednijenhuisthm}. We also explicitly describe the minimal resolution $\Binijenhuis_\infty$ of $\Binijenhuis$. In Section \ref{geominterpret}, we give a geometrical description of the operad $\Binijenhuis_\infty$ and prove Theorems \ref{introthmnongraded} and \ref{introthmgraded}.

\subsection*{Preliminaries} By $\K$ we denote an arbitrary field of characteristic zero. For a vector space $V$ over $\K$ we denote the linear dual $\Hom(V,\K)$ by $V^*$. The symmetric product of vector spaces is denoted by $\odot$. Let $\s_n$ denote the symmetric group of permutations of the set $\{1,\dotsc,n\}$ and let $\bbone_n$ and $\sgn_n$ denote the trivial and sign representations of $\s_n$, respectively. Throughout the paper we use the Einstein summation convention, i.e.~we always sum over repeated upper and lower indices, $X^a\p_a=\sum_a X^a\p_a$. 


\section{Nijenhuis geometry}
\label{Nijenhuisgeom}
Here we review basic definitions concerning Nijenhuis structures and define a notion of compatibility.

\subsection{Nijenhuis structures}

Let $V$ be a manifold and let $\Tcal_V$ denote the tangent sheaf. To a morphism $J\colon\Tcal_V\to\Tcal_V$ one can associate a morphism $\Ncal_J\colon\wedge^2~\Tcal_V\to\Tcal_V$, called the \emph{Nijenhuis torsion}, defined by
\begin{equation}
\label{nc}
 \Ncal_J(X,Y):=JJ[X,Y]+[JX,JY]-J[X,JY]-J[JX,Y].
\end{equation}
We call an endomorphism $J$ of $\Tcal_V$ a \emph{Nijenhuis structure} if it satisfies $\Ncal_J=0$.

An \emph{almost complex structure} on an even dimensional manifold $V$ is an endomorphism $J$ of $\Tcal_V$ satisfying $J^2=-\Id$. By the Newlander-Nirenberg Theorem \cite{Newlander1957} the vanishing of the Nijenhuis torsion of an almost complex structure $J$ is equivalent to $J$ being a complex structure on $V$. This is probably the most important application of the Nijenhuis torsion. Other examples can be found in \cite{Nijenhuis1955}.

\subsection{Formal graded manifolds}
\label{fgm}

Let $(V,d)$ be a dg vector space with a homogeneous basis $\{e_a\}$ and associated dual basis $\{t^a\}$. We may view $V$ as a formal graded manifold by considering a formal neighborhood of the origin. Thus a formal graded manifold is naturally pointed with $0$ denoting the distinguished (and only) point. For the structure sheaf we have $\Ocal_V\iso\K\llbracket t^a\rrbracket$ and the tangent sheaf $\Tcal_V$ is generated as an $\Ocal_V$-module by $\{\p_a\}$, where we write $\p_a$ for $\ddt{a}$. The differential $d$ of $V$ corresponds to a degree one vector field $D$, linear in $t$, given by $D=D^b_a t^a \p_b$, where $D^b_a\in\K$ are defined by $d(e_a)=D^b_a e_b$. That $d$ is a differential, i.e. $d^2=0$, is equivalent to $[D,D]=0$. Denoting $sdt^a$ by $\sdt^a$, where $s$ is a formal symbol of degree one, the de Rham algebra is defined by $\Omega^\bullet_V=\odot^\bullet \Tcal^*_V[-1]\iso\K\llbracket t^a,\sdt^s\rrbracket$.

\subsection{Vector field valued differential forms}
\label{vfms}

A \emph{vector field valued differential form}, or \emph{vector form} for short, is a tensor field in  $\Omega^\bullet_V\otimes_{\Ocal_V}\Tcal_V$. We will usually omit the subscript $\Ocal_V$ from the notation. Vector forms where used by Fr\"olicher and Nijenhuis in the study of derivations of $\Omega^\bullet_V$ \cite{Frolicher1956}.

An endomorphism $J\colon\Tcal_V \to \Tcal_V$ can be considered as an element in $\vfmsi{1}$; we identify $J$ given by $J(X^a\p_a)=J^b_a X^a\p_b$, where $X^a, J^b_a\in\Ocal_V$, with $J=J^b_a \sdt^a\p_b$.

Similarly an element $K\in\vfms$ can be viewed as a morphism $K\colon\wedge^\bullet \Tcal_V \to \Tcal_V$; for $K=\sum_i K_i$, with
\[
K_i=K^b_{[a_1\dotsb a_i]}\sdt^{a_1}\dotsb\sdt^{a_i}\p_b\in\vfmsi{i},
\]
and
\[
X=X^{[b_1\dotsb b_j]} \p_{b_1}\wedge\dotsb\wedge\p_{b_j}\in \wedge^j\Tcal_V
\]
we have $K(X)=K^b_{[a_1\dotsb a_i]}X^{[a_1\dotsb a_i]}\p_b$.

\subsection{The Fr\"olicher-Nijenhuis bracket}
\label{FNbracket}

In \cite{Nijenhuis1955} A.~Nijenhuis defined a Lie bracket on vector forms called the Fr\"olicher-Nijenhuis bracket. Let
\[
K=K^{i}_{[a_1\dotsb a_p]}(t)\sdt^{a_1}\dotsb\sdt^{a_p} \p_i
\]
and
\[
L=L^{j}_{[c_1\dotsb c_q]}(t)\sdt^{c_1}\dotsb \sdt^{c_q} \p_j
\]
be vector forms, $p,q\geq 0$. The  Fr\"olicher-Nijenhuis bracket is defined by
\begin{multline*}
\FN{K}{L}=\left(K^{i}_{[a_1\dotsb a_p|}\p_i L^{j}_{|a_{p+1}\dotsb a_{p+q}]}-(-1)^{pq}L^{i}_{[a_1\dotsb a_q|}\p_i K^{j}_{|a_{q+1}\dotsb a_{p+q}]}\right. \\
\left.-p K^{j}_{[a_1\dotsb a_{p-1}|i|}\p_{a_p} L^{i}_{a_{p+1}\dotsb a_{p+q}]}+(-1)^{pq}sL^{j}_{[a_1\dotsb a_{q-1}|i|}\p_{a_q} K^{i}_{a_{q+1}\dotsb a_{p+q}]}\right)\\
\sdt^{a_1}\dotsb\sdt^{a_{p+q}} \p_j.
\end{multline*}

When $p=q=0$ this reduces to the ordinary Lie bracket of vector fields and when $p=0$, $q> 0$ it is called the Lie derivative of $L$ along $K$.

Given two elements of $K,L\in\vfmsi{1}$ we have $\FN{K}{L}\in\vfmsi{2}$. Nijenhuis showed \cite{Nijenhuis1955} that with the above identification between endomorphisms of $\Tcal_V$ and $\vfms$,  we have in fact $\Ncal_J=\FN{J}{J}$. Thus a Nijenhuis structure can equivalently be defined as an element $J\in\vfmsi{1}$ such that $\FN{J}{J}=0$.

\subsection{Nijenhuis structures on graded manifolds}
\label{gradednijenhuis}

When $V$ is a manifold concentrated in degree zero, i.e.~an ordinary manifold, the degree $i$ elements of $\vfms$ are those of $\vfmsi{i}$. We notice that classical Nijenhuis structures are the Maurer-Cartan elements of $\vfms$, i.e.~ degree one elements $J$ satisfying $\FN{J}{J}=0$. 

In the case when $V$ is graded manifold it is natural to retain the definition of Nijenhuis structures on $V$ as Maurer-Cartan elements of $\vfms$. Using the grading induced by the one of $V$, degree one elements of $\vfms$ are now not necessarily in $\vfmsi{1}$. We define a \emph{\Nijinfty\ structure} on a graded manifold $V$ to be a Maurer-Cartan element of $\vfms$. We call a \Nijinfty\ structure on a pointed manifold \emph{pointed} if it satisfies $J|_p=0$, where $p$ is the distinguished point. 

\Nijinfty\ structures where studied by S.~Merkulov in \cite{Merkulov2005}. One of the results therein is that \Nijinfty\ structures correspond to contractible dg manifolds, see Section 5 of \cite{Merkulov2005} for details. A \Nijinfty\ structure can be interpreted as a family of maps $\{J_i\colon\wedge^i\Tcal_V\to \Tcal_V\}_{i\in\N}$ (or as a map $J\colon\pvfs\to \Tcal_V$) satisfying certain quadratic relations. The geometrical (or algebraic) significance of these maps is an interesting and open question.

There is another grading on $\vfms$ given by the tensor power of $\Tcal^*[-1]$; the elements of \emph{weight} $i$ are those of $\vfmsi{i}$.
We define a \emph{graded Nijenhuis structure} to be a Maurer-Cartan element of $\vfms$ of weight one, cf.~graded Lie algebras versus $\ELL_\infty$-algebras.

\subsection{Compatible Nijenhuis structures}

Given two endomorphisms $J$ and $K$ of $\Tcal_V$ one can consider the morphism $\Ncal_{J,K}\colon\Tcal_V\otimes\Tcal_V\to\Tcal_V$ defined by
\begin{equation*}
 \Ncal_{J,K}(X,Y)=JK[X,Y]+[JX,KY]-J[X,KY]-K[JX,Y].
\end{equation*}
When $J$ and $K$ are \emph{commuting}, i.e.~$J\circ K=K\circ J$, we have that $\Ncal_{J,K}$ corresponds to a (1,2) tensor field which in general is not alternating. It was introduced in \cite{Nijenhuis1951} in the study of the problem of when eigenvectors of a tangent bundle endomorphism form an integrable distribution. It was further considered in \cite{Bogoyavlenskij2006} where an explicit description was given of the relation between $\Ncal_J$ and $\Ncal_{A,B}$ when $A$ and $B$ are polynomials in $J$. For arbitrary $J,K\in\vfmsi{1}$ the following identity holds
\begin{equation*}
 \Ncal_{J,K}+\Ncal_{K,J}=\FN{J}{K}.
\end{equation*}

We call two (graded) Nijenhuis structures $J$ and $K$ \emph{compatible} if their sum again is a Nijenhuis structure and we call such a pair a \emph{bi-Nijenhuis structure}. Note that this is equivalent to that $\alpha J + \beta K$ is a Nijenhuis structure for any $\alpha,\beta\in\K$.
 For the Nijenhuis torsion of the sum of two Nijenhuis structures $J$ and $K$ we have
 \[
 \Ncal_{J+K}=\Ncal_J+\Ncal_{J,K}+\Ncal_{K,J}+\Ncal_K=\Ncal_{J,K}+\Ncal_{K,J}=\FN{J}{K}.
 \]
 The compatibility of $J$ and $K$ is thus equivalent to $\FN{J}{K}=0$. Introducing a formal parameter $\hslash$ and considering the linearization in $\hslash$ of the Fr\"olicher-Nijenhuis bracket, the pair $J$ and $K$ is a bi-Nijenhuis structure precisely when
\[
 \FN{J+\hslash K}{J+\hslash K}=0.
\]

\begin{rem*}
 The notion of compatible Nijenhuis structures has been defined differently elsewhere, e.g.~in \cite{Longguang2004} it is defined to be what we call commuting Nijenhuis structures.
\end{rem*}


\section{Operad profiles I: Extracting the operad}
\label{extraction}

We first review the operadic profile of Nijenhuis structures given by S.~Merkulov in \cite{Merkulov2005}. Thereafter we extract the operad encoding the fundamental part of Nijenhuis structures. See \S \ref{resolutionsofoperads} for definitions and notation related to operads. 

\subsection{The operad profile of Nijenhuis structures}

Let $V$ be a dg vector space considered as a formal graded manifold. Recall (\S \ref{gradednijenhuis}) that a pointed graded Nijenhuis structure on $V$ is a degree one element $J\in\vfmsi{1}$ such that $\FN{J}{J}=0$ and $J|_0=0$. Using the notation of \S\ref{fgm} we have
\[
 J=\sum_{i\geq 1} J^b_{(c_1\dotsb c_i) a}t^{c_1}\dotsb t^{c_i}\gamma^a\p_b,
\]
where $J^b_{(c_1\dotsb c_i) a}\in\K$. The vector form $J$ defines a family of degree zero maps $\{j_i\colon  V^{\odot i}\otimes V\to V\}$ by
\[
 j_i(e_{c_1}\odot\dotsb\odot e_{c_i}\otimes e_a)=J^b_{c_1\dotsb c_i a}e_b.
\]
Let $\Jf$ denote the part of $J$ corresponding to $j_1$. It was observed in \cite{Merkulov2005} that $j_1$ gives $V$ a pre-Lie algebra structure if and only if $\FN{\Jf}{\Jf}=0$. We want to encode this in the language of operads. To $j_1$ we associate the corolla $\PLsmall$. Denoting the nontrivial element of $\s_2$ by $(12)$ we depict the elements $\PLsmall$ and $\PLsmall(12)$ by the planar corollas
\[
 \PL{1}{2} \bhs\text{and}\bhs \PL{2}{1},
\]
respectively. The condition $\FN{\Jf}{\Jf}=0$ then translates to
\begin{equation*}
R_{a,b,c}=\PLYPLY{a}{b}{c}-\PLYPLYop{a}{b}{c}- \PLYPLY{a}{c}{b}+\PLYPLYop{a}{c}{b},
\end{equation*}
for $a,b,c$ being the cyclic permutations of $1,2,3$. Let $E_{\text{PL}}=\K\PLsmall\oplus\K\PLsmall(12)=\K[\s_2]$ and $R_{\text{PL}}=R_{1,2,3}\cup R_{2,3,1} \cup R_{3,1,2}$, then the operad of pre-Lie algebras $\Prelie$ is given by $\QO{E_{\text{PL}}}{R_{\text{PL}}}$. This operad was shown to be Koszul in \cite{Chapoton2001} and thus its minimal resolution $\Prelie_\infty$ can be computed explicitly. Representations of $\Prelie_\infty$ correspond to Nijenhuis structures which are linear in $t$. To obtain arbitrary pointed Nijenhuis structures we need to add another fundamental part.

A \emph{homological vector field} on a formal graded manifold $V$ is a degree one vector field $Q$ such that $[Q,Q]=0$. To
\[
 Q=\sum_{i\geq 1} Q^b_{(c_1\dotsb c_i)}t^{c_1}\dotsb t^{c_i}\p_b
\]
we can associate a family of degree one maps $\{q_i\colon\odot^i V\to V\}$, as we did for $J$, by
\[
 q_i(e_{c_1}\odot\dotsb\odot e_{c_i})=Q^b_{(c_1\dotsb c_i)} e_b.
\]
It was observed by Kontsevich \cite{Kontsevich2003} that the condition $[Q,Q]=0$ is equivalent to that $\{q_i\}$ gives $V[-1]$ the structure of an $\ELL_\infty$-algebra. The fundamental operation here is $q_2$ with the other operations being higher homotopies of $q_2$. The corresponding part $\Qf$ of $Q$ is the other fundamental part that is necessary to model Nijenhuis structures. We depict the corolla by which we encode the operation $q_2$ by $\Ysmall$. Since $q_2$ is symmetric the corolla satisfies $\Ysmall(12)=\Ysmall$. The condition $[\Qf,\Qf]=0$ translates to
\begin{equation*}
R_{\text L}=\YY{1}{2}{3}+\YY{2}{3}{1}+\YY{3}{1}{2}.
\end{equation*}
Let $E_{\text L}=\K \Ysmall=\bbone_1[-1]$. Then $\Lieone=\QO{E_{\text L}}{R_{\text L}}$ is the operad of odd Lie algebras and representations of its minimal resolution $\Lieone_\infty$ in a vector space $V$ are $\ELL_\infty$-algebras on $V[-1]$. The reason that $Q$ itself is not visible in classical Nijenhuis structures is that it has no part in degree zero.

The last step in encoding the operad profile of Nijenhuis structures is achieved by considering the interplay between $\Jf$ and $\Qf$. Here we use that the Fr\"olicher-Nijenhuis bracket is an extension of the Lie bracket of vector fields. The compatibility is given by
\[
 \FN{\Jf}{\Qf}=0.
\]
This translates to
\begin{equation*}
 R^{C}_{a,b,c}=\YPLY{a}{b}{c} + \YPLYop{a}{b}{c} + \YPLYop{b}{a}{c} - \PLYYop{a}{b}{c} - \PLYYop{b}{a}{c},
\end{equation*}
for $a,b,c$ being the cyclic permutations of $1,2,3$. Let $R_C=R^{C}_{1,2,3}\cup R^{C}_{2,3,1}\cup R^{C}_{3,1,2}$.

Actually, since $\FN{\Qf}{\Qf}\in\vfmsi{0}$, $\FN{\Jf}{\Qf}\in\vfmsi{1}$, and $\FN{\Jf}{\Jf}\in\vfmsi{2}$, the relations $R_{\text{PL}}$, $R_{\text{L}}$, and $R_{\text{C}}$ can all be simultaneously expressed by the single condition
\[
 \FN{\Qf+\Jf}{\Qf+\Jf}=0.
\]

\begin{defn*}[Merkulov] The operad $\Nijenhuis$ is the quadratic operad 
\[\QO{E_{\text{PL}}\oplus E_{\text L}}{R_{\text{PL}}\cup R_{\text L}\cup R_{\text C}}.
\]
\end{defn*}

The operad $\Nijenhuis$ thus contains the Lie operad and the pre-Lie operad with the operations differing by one in degree and compatible in the sense of $R_C$. See \cite{Merkulov2005} for interpretations of this compatibility.

\begin{rem*}
The operad $\Nijenhuis$ was denoted by $\Preliesquared$ in \cite{Merkulov2005}. We renamed it to avoid confusion with the operad of compatible pre-Lie algebras. 
\end{rem*}

Using the above correspondence between endomorphisms of $V$ and vector forms Merkulov \cite{Merkulov2005} was able to show the following:

\begin{thm}[Merkulov]
\label{Merkulovsthm}
 There is a one-to-one correspondence between representations of $\Omega(\Nijenhuis^{\antishriek})$ in $V$ and pointed Nijenhuis$_\infty$ structures on the formal manifold associated to $V$.
\end{thm}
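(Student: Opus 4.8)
The plan is to realize representations of $\Omega(\Nijenhuis^{\antishriek})$ and pointed Nijenhuis$_\infty$ structures as Maurer-Cartan elements of one and the same Lie algebra. Recall the general principle (see \cite{Kontsevich2003,Merkulov2005}) that, for a coaugmented cooperad $\Ccal$ and a dg vector space $V$, representations of the cobar construction $\Omega(\Ccal)$ in $V$ are the same datum as Maurer-Cartan elements of the convolution Lie algebra $\gfrak_{\Ccal,V}=\bigl(\prod_{n\geq 2}\Hom_{\s_n}(\overline{\Ccal}(n),\End_V(n)),\,\partial,\,[\,\cdot\,,\,\cdot\,]\bigr)$, where $\overline{\Ccal}$ is the coaugmentation coideal, $[\,\cdot\,,\,\cdot\,]$ is the commutator of the pre-Lie product built from the cocompositions of $\Ccal$ and the operadic composition of $\End_V$, and $\partial$ is induced by the differential of $V$; an operad morphism $\Omega(\Ccal)\to\End_V$ is then the same as a solution of $\partial\alpha+\half[\alpha,\alpha]=0$. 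It therefore suffices to produce an isomorphism of dg Lie algebras $\gfrak_{\Nijenhuis^{\antishriek},V}\iso\gfrak_V$, where $\gfrak_V$ is the Lie subalgebra of $\vfms$, under the Fr\"olicher-Nijenhuis bracket, consisting of the formal vector forms on the formal manifold associated to $V$ that vanish at the origin, in such a way that Maurer-Cartan elements $\alpha$ correspond to vector forms $J=D+\alpha$ with $\FN{J}{J}=0$, where $D$ is the homological vector field of $(V,d)$.

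First I would make the Koszul dual cooperad $\Nijenhuis^{\antishriek}=\Ccal\bigl(sE_{\text{PL}}\oplus sE_{\text L};\,s^2(R_{\text{PL}}\cup R_{\text L}\cup R_{\text C})\bigr)$ explicit, graded by arity and by \emph{weight}, where the weight of a (co)operation counts its pre-Lie cogenerators. Since $\Prelie$ is Koszul with $\Prelie^{!}=\Perm$ \cite{Chapoton2001}, and $\Lieone$ is a degree shift of the Lie operad, with Koszul dual the corresponding shift $\Comone$ of the commutative operad, the pure relations $R_{\text{PL}}$ and $R_{\text L}$ contribute, up to the customary shifts and signs, the duals of $\Perm$ and of $\Comone$. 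The work is to check that the mixed quadratic relation $R_{\text C}$ --- itself extracted from $\FN{\Jf}{\Qf}=0$ --- dualizes to exactly the coupling under which, summed over all $n$,
\[
\prod_{n\geq 2}\Hom_{\s_n}\bigl(\overline{\Nijenhuis^{\antishriek}}(n),\End_V(n)\bigr)\ \iso\ \gfrak_V .
\]
Under this identification the weight-$0$ part yields the $\ELL_\infty$-type operations $\{q_i\colon\odot^i V\to V\}_{i\geq 2}$, the weight-$1$ part yields the pre-Lie-type operations $\{j_i\colon V^{\odot i}\otimes V\to V\}$, and the weight-$p$ part for $p\geq 2$ yields higher maps $\wedge^p\Tcal_V\to\Tcal_V$; together these are precisely the Taylor components of a vector form as in \S\ref{vfms}.

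Next I would transport the convolution pre-Lie product along this linear isomorphism and check that its commutator equals the Fr\"olicher-Nijenhuis bracket of \S\ref{FNbracket}. On the arity-$3$ part this is automatic: by construction the quadratic part of the cobar codifferential of $\Omega(\Nijenhuis^{\antishriek})$ encodes $R_{\text{PL}}\cup R_{\text L}\cup R_{\text C}$, which is exactly the arity-$3$ part of $\FN{\Qf+\Jf}{\Qf+\Jf}=0$. What remains is to verify, in all arities and weights, that the weight decomposition of a cocomposition in $\Nijenhuis^{\antishriek}$ reproduces term by term the four summands of the Fr\"olicher-Nijenhuis formula --- including the signs $(-1)^{pq}$ and the ``inner'' derivative terms $\p_{a_p}L^i_{\cdots}$ that distinguish the weight-$0$ (Lie-derivative) sector from the sectors of positive weight. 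Granting this, $\partial$ corresponds to $\FN{D}{\,\cdot\,}$, so the Maurer-Cartan equation becomes $\FN{D+\alpha}{D+\alpha}=0$, i.e.\ $\FN{J}{J}=0$; and since $\Nijenhuis^{\antishriek}$ is trivial in arity $1$, every weight-$p$, arity-$n$ component has $n>p$, so the assembled vector form has no order-zero Taylor coefficient, i.e.\ $J|_0=0$. This gives the desired bijection, and it is manifestly natural in $V$.

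I expect the main obstacle to be precisely this last identification of the convolution bracket with the Fr\"olicher-Nijenhuis bracket beyond the quadratic level: although the quadratic compatibility is designed into $\Nijenhuis$, matching the full graded Lie structure on the nose --- in particular the asymmetry between the weight-$0$ and positive-weight sectors, and the Koszul signs coming from the suspensions $s$ and $s^2$ appearing in $\Nijenhuis^{\antishriek}$ --- is a delicate sign-bookkeeping exercise. Pinning down $\Nijenhuis^{\antishriek}$ explicitly, and in particular the dualization of $R_{\text C}$, is the other genuinely substantive step; everything else is standard operadic Koszul-duality machinery.
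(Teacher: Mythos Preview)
The paper does not actually prove this theorem: it is stated with attribution to Merkulov and cited from \cite{Merkulov2005}, with only the sentence ``Using the above correspondence between endomorphisms of $V$ and vector forms Merkulov \cite{Merkulov2005} was able to show the following'' preceding it. So there is no proof in the paper against which to compare your proposal.

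That said, your strategy is sound and in fact coincides with the conceptual framework the paper \emph{does} spell out, in \S\ref{geominterpret}, for the bi-Nijenhuis analogue. There the author assembles the convolution dg Lie algebra $\Lcal_\Qcal(V)=(((\Pcal^{\Ccal})^{\s})^{\tot},[\_,\_],\partial)$ with $\Pcal=\End_V$ and $\Ccal=\Sigma\ol{\Qcal^{\antishriek}}$, observes that representations of $\Qcal_\infty$ are its Maurer-Cartan elements, and then exhibits an explicit isomorphism of dg Lie algebras $\Phi\colon\Lcal_{\Binijenhuis}(V)\to\tilde{\gfrak}_V$, where $\tilde{\gfrak}_V$ is the appropriate subalgebra of vector forms under the Fr\"olicher-Nijenhuis bracket. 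Your plan specializes this to $\Qcal=\Nijenhuis$ (dropping the parameter $\hslash$). The paper likewise treats the verification that the convolution bracket matches the Fr\"olicher-Nijenhuis bracket as a direct, if tedious, computation; your expectation that this is the substantive step --- and that the rest is standard Koszul-duality machinery --- is exactly right. One small refinement: the paper makes explicit that the differential $d$ of $V$ is absorbed as the vector field $D$ with $\partial=\FN{D}{\_}$, and that the Maurer-Cartan element is $\tilde{\Gamma}=\Gamma-D$ rather than $\Gamma$ itself; you have this, but it is worth keeping the bookkeeping straight so that the arity-one part (which lies outside the coaugmentation ideal) is handled by $D$ and not by the representation data.
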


When $V$ is concentrated in degree zero we obtain precisely classical Nijenhuis structures.

\begin{rem*}
That the \Nijinfty\ structures considered in Theorem \ref{Merkulovsthm} are pointed poses no real problem. Given an arbitrary non-pointed \Nijinfty\ structure on a formal graded manifold $V$, i.e.~an element $J\in\vfms$ such that $\FN{J}{J}=0$ and $J|_0\neq 0$, it can be obtained from $\Nijenhuis_\infty$ by considering representations in $V\oplus\K$. For a formal variable $x$, viewed as a coordinate on $\K$, we have that $xJ\in\Omega^\bullet_{V\oplus\K}\otimes\Tcal_{V\oplus\K}$ vanishes at the distinguished point of $V\oplus \K$ and since $xJ$ still satisfies $\FN{xJ}{xJ}=0$, it corresponds to a representation of $\Nijenhuis_\infty$.
\end{rem*}

\subsection{Extracting the operad of bi-Nijenhuis structures}

Recall that a bi-Nijenhuis structure is pair $(J,K)$ of Nijenhuis structures such that their sum is a Nijenhuis structure. This is equivalent to that the following conditions are satisfied:
\[
 \FN{J}{J}=0,\bhs \FN{J}{K}=0,\bhs \text{and} \bhs \FN{K}{K}=0,
\]
or equivalently 
\[
\FN{J+\hslash K}{J+\hslash K}=0.
\] 
We want to define an operad capturing the fundamental part of this structure, analogously to how a Nijenhuis structure is encoded.

Let $\Jfa$ and $\Jfb$ denote the fundamental parts of $J$ and $K$, respectively, and let $\Qf$ again denote the fundamental part of a homological vector field. That $J$ and $K$ are Nijenhuis structures is encoded by translating
\[
 \FN{\Qf}{\Qf}=0,\mhs \FN{\Qf}{\Jfa}=0,\mhs \FN{\Jfa}{\Jfa}=0, \mhs \FN{\Qf}{\Jfb}=0, \mhs\text{and}\mhs \FN{\Jfb}{\Jfb}=0
\]
to corresponding operadic relations. In doing this we denote the corollas encoding $\Jfa$ and $\Jfb$ by $\PLWsmall$ and $\PLBsmall$ respectively. The compatibility of $J$ and $K$ is captured by
\[
\FN{\Jfa}{\Jfb}=0,
\]
which translates to
\[
R_{a,b,c}=\PLBPLW{a}{b}{c}-\PLBPLWop{a}{b}{c}- \PLBPLW{a}{c}{b}+\PLBPLWop{a}{c}{b}+
\PLWPLB{a}{b}{c}-\PLBPLWop{a}{b}{c}- \PLBPLW{a}{c}{b}+\PLBPLWop{a}{c}{b}.
\]

We make the following definition:
\begin{defn*}
 Let $\circ$ and $\bullet$ be pre-Lie products on a vector space $V$. We call them \emph{compatible} if their sum $\circ+\bullet$, defined by $a(\circ+\bullet)b:=a\circ b + a \bullet b$ is a pre-Lie product. Equivalently, the products are compatible if they satisfy
 \[
 (a\bullet b)\circ c - a\circ (b\bullet c) -(a\bullet c) \circ b + a\circ(c\bullet b)+
 (a\circ b)\bullet c - a\bullet (b\circ c) -(a\circ c) \bullet b + a\bullet(c\circ b)=0.
 \]
 \end{defn*}
Thus the compatibility of $\Jfa$ and $\Jfb$ means that the corresponding maps $j_1$ and $k_1$ give $V$ the structure of compatible pre-Lie algebras. See \cite{Strohmayer2008a} for a treatment of operads encoding compatible structures.

We can encode all the above conditions on $\Jfa$, $\Jfb$, and $\Qf$ by the single equation
\[
 \FN{\Qf+\Jfa+\hslash \Jfb}{\Qf+\Jfa+\hslash \Jfb}=0.
\]
Translated to the language of operads we obtain the following.

\begin{defn*}
We define the quadratic operad $\Binijenhuis$ by $\Binijenhuis=\QO{M}{R}$, where $M=\{M(n)\}_{n\geq 0}$ is the $\smodule$ given by
\[
M(n)=
\begin{cases}
\bbone_2[-1]\oplus\K[\s_2]\oplus\K[\s_2] & \text{ if $n=2$} \\
0 & \text{otherwise}
\end{cases}
\]
and the relations $R$ are given by
\[
\begin{split}
&\PLWPLW{a}{b}{c} - \PLWPLWop{a}{b}{c} - \PLWPLW{a}{c}{b} + \PLWPLWop{a}{c}{b}, \shs \PLBPLB{a}{b}{c} - \PLBPLBop{a}{b}{c} - \PLBPLB{a}{c}{b} + \PLBPLBop{a}{c}{b}, \\
  &\PLWPLB{a}{b}{c} - \PLWPLBop{a}{b}{c} - \PLWPLB{a}{c}{b} + \PLWPLBop{a}{c}{b} + \PLBW{a}{b}{c} - \PLBPLWop{a}{b}{c} - \PLBPLW{a}{c}{b} + \PLBPLWop{a}{c}{b},\\
  &\YY{1}{2}{3}+\YY{2}{3}{1}+\YY{3}{1}{2}, \shs  \YPLW{a}{b}{c} + \YPLWop{a}{b}{c} + \YPLWop{b}{a}{c} - \PLWYop{a}{b}{c} - \PLWYop{b}{a}{c}, \\
  &\YPLB{a}{b}{c} + \YPLBop{a}{b}{c} + \YPLBop{b}{a}{c} - \PLBYop{a}{b}{c} - \PLBYop{b}{a}{c}.
\end{split}
\]
\end{defn*}

To sum up this definition, a $\Binijenhuis$ algebra is a pair of $\Nijenhuis$ algebras sharing the same Lie bracket and such that the pre-Lie products are compatible.


\section{Operad profiles II: Computing the resolution}
\label{resolution}

We first recall some notions related to resolutions of operads, then we give a brief account of Hoffbeck's PBW bases for operads. Next, we use this method to prove the Koszulness of the operads of Nijenhuis structures and bi-Nijenhuis structures. Finally we give an explicit description of the minimal resolution of the bi-Nijenhuis operad.

\subsection{Resolutions of operads}
\label{resolutionsofoperads}

The free operad $\Free(M)$ on a dg $\s$-module $(M,d)$ has a natural differential induced by $d$. It is given by the alternating sum of applying $d$ to the decorated vertices of an element of $\Free(M)$. We call a dg operad $(\Free(M),\delta)$ on a dg $\s$-module $M$ \emph{quasi-free} if for the differential $\delta=d+d'$ we have that $d$ is the differential induced by the one of $M$ and $d'$ is an arbitrary derivation of $\Free(M)$. If for some operad $\Pcal$ a morphism $\phi\colon(\Free(M),\delta)\to\Pcal$ induces an isomorphism on cohomology we say that $(\Free(M),\delta,\phi)$ is a \emph{quasi-free resolution} of $\Pcal$. We call it \emph{minimal} if it satisfies $d'(M)\subset\Free_{(2)}(M)$. Here $\Free_{(i)}(M)$ denotes the $\s$-submodule of $\Free(M)$ consisting of the elements whose underlying trees have $i$ vertices. A quasi-cofree cooperad is defined analogously.

The \emph{suspension} $\Sigma\Pcal$ of an operad $\Pcal$ is defined by $\Sigma\Pcal(n)=\Pcal(n)[-1]$, similarly the \emph{desuspension} $\Sigma^{-1}\Pcal$ is defined by $\Sigma^{-1}\Pcal(n)=\Pcal(n)[1]$. An operad is called \emph{augmented} if there is a morphism $\epsilon\colon\Pcal\to I$, where $I$ is the operad defined by $I(1)=\K$ and zero otherwise. The \emph{augmentation ideal} is defined by $\ol{\Pcal}=\ker \epsilon$. Similarly we define the coaugmentation of a cooperad $\Ccal$ as $\ol{\Ccal}=\coker \eta$, for $\eta\colon I \to \Ccal$.

The \emph{Bar construction} $\BC(\Pcal)$ of an augmented dg operad $(\Pcal,d)$ is the quasi-cofree cooperad $(\Free^{c}(\Sigma^{-1}\ol{\Pcal}),\delta)$, where $\delta=d+d'$ and $d'$ is induced by the composition product in $\Pcal$. The \emph{Koszul dual} of $\Pcal$ is the cooperad $\Pcal^{\antishriek}_{(s)}=\Ho_s(\BC_{(\bullet)}(\Pcal)_{(s)})$. Here the grading in parentheses is a weight grading induced from the natural weight grading in the free operad given by the number of vertices, see \cite{Strohmayer2008} for details. We say that $\Pcal$ is \emph{Koszul} if the canonical inclusion $\Pcal^{\antishriek}\hookrightarrow\BC{\Pcal}$ is a quasi-isomorphism.

The \emph{Cobar construction} $\CBC(\Ccal)$ of a dg cooperad $(\Ccal,d)$ is the quasi-free operad $(\Free(\Sigma\ol{\Ccal}),\delta)$, where $\delta=d+d'$ and $d'$ is induced by the cocomposition product in $\Ccal$. In \cite{Ginzburg1994} it was shown that if $\Pcal$ is Koszul, then $\CBC(\Pcal^{\antishriek})$ is a quasi-free resolution of $\Pcal$.

When $\Pcal=\QO{M}{R}$ is a quadratic operad (i.e.~$R\subset\Free_{(2)}(M)$) which is finite dimensional in each arity $\Pcal(n)$ there is an explicit way to calculate the part $d'$ of the differential of $\CBC(\Pcal^{\antishriek})$. Let $M^{\cz}$ denote the \emph{Czech dual} $\s$-module defined by $M^{\cz}(n)=M(n)^*\otimes\sgn_n$. There is a natural pairing $\langle\_,\_\rangle\colon\Free_{(2)}(M^{\cz})\otimes\Free_{(2)}(M)\to\K$, defined to be zero unless the underlying trees and decorations match, again see \cite{Strohmayer2008} for details. The orthogonal complement to $R$ under this pairing is denoted by $R^{\perp}$. We define the \emph{Koszul dual operad} of $\Pcal$ to be $\Pcal^{!}=\QO{M^{\cz}}{R^{\perp}}$. It is related to the Koszul dual by
\begin{equation}
\label{corkoszuldualkoszuldualoperad}
(\Pcal^{\antishriek})_{(s)}(n)\iso\Sigma^{-s}((\Pcal^!)_{(s)}(n))^{\cz}.
\end{equation}
where the isomorphism is of cooperads and the cocomposition product on the right hand side is the linear dual of the composition product of $\Pcal^{!}$. Thus, computing the Koszul dual operad and its composition product gives us a tractable way of determining the differential of the bar construction on $\Pcal^{\antishriek}$.

\subsection{A user's guide to PBW-bases for operads}
\label{PBWparagraph}

This section is a summary of the results we need from \cite{Hoffbeck2008}.

By a \emph{tree} we mean a directed rooted tree with an additional set of edges, called external, attached on one side only to the leaves of the tree. The external edges are labeled by the numbers $1,\dots,n_e$, where $n_e$ denotes the number of external edges (for esthetical and suggestive reasons we depict the trees as having an additional external edge attached to the root vertex). The other edges are called internal. An input edge of a vertex $v$ is an internal edge attached to and directed towards $v$ or an external edge attached to $v$. Let $E_v$ denote the set of input edges of $v$ and let $n_v$ denote the cardinality of $E_v$. A tree decorated with an $\s$-module $M$ is a tree together with an assignment to each vertex $v$ of an element of $M(n_v)$. The free operad on an $\s$-module is spanned as a $\K$-module by all isomorphism classes of decorated trees. The action of $\s$ is given by permutation of the labels of the external edges.

A tree is called \emph{reduced} if all vertices have at least one incoming edge. If an $\s$-module satisfies $M(0)=0$, then all non-zero decorated trees in $\Free(M)$ are reduced. An operad is called reduced if it is spanned by reduced trees. A reduced tree has a natural planar representation.
\begin{enumerate}
\item To every $e\in E_v$ we associate the minimum of the labels of the external edges of the tree which are linked to $e$ (we consider an external edge to be linked to itself).
\item We place the edges of $E_v$ (and thus the vertices directly above) from left to right above $v$ in ascending order.
\end{enumerate}

Let $M$ be an $\s$-module, let $B^M$ be a $\K$-basis of $M$, and let $B^{\Free(M)}$ denote all trees decorated with elements of $B^M$. The set $B^{\Free(M)}$ is a $\K$-basis of $\Free(M)$. For a tree $\tau$, we denote by $B^{\Free(M)}_\tau$ the subset of $B^{\Free(M)}$ consisting of the elements whose underlying tree is $\tau$.

Given an order of the elements of $B^M$, and using the above defined planar representations of trees, we define an order on $B^{\Free(M)(n)}$. To each decorated tree in $\Free(M)(n)$ we associate a sequence of $n$ words in the alphabet $B^M$ as follows. There is a unique path of vertices from the root of the tree to the external edge labeled by $i$. Let $a_i$ be the word consisting (from left to right) of the labels of these vertices (from bottom to top). Thus we obtain the sequence $\bar{a}=(a_1,\dotsc, a_n)$. The words are ordered by the length lexicographical order; we first compare two words $a$ and $b$ by their length ($a < b$ if $l(a)<l(b)$, where $l$ is the length) and if the lengths are equal we compare them lexicographically using the order on $B^M$. We compare two sequences $\bar{a}$ and $\bar{b}$ associated to $\alpha,\beta\in B^{\Free(M)}(n)$ by first comparing $a_1$ with $b_1$, next $a_2$ with $b_2$, and so forth. This order is compatible with the operad structure of $\Free(M)$, see \cite{Hoffbeck2008} for details.

To each internal edge $e$ of a tree $\tau$ we define the restricted tree $\tau_e$ as follows. The vertices of $\tau_e$ are the two vertices $v_1,v_2$ adjacent to $e$. The edges of $\tau_e$ are all edges adjacent to $v_1$ and $v_2$. The external edges of $\tau_e$ are given labels according to which labels are directly linked to them; the external edge of $\tau_e$ linked to the external edge of $\tau$ labeled by $1$ is given this label, the external edge of $\tau_e$ linked to the external edge of $\tau$ which has the least of the labels not linked to the previous edge is given the label $2$, and so forth.

\begin{defn*}
 A \emph{PBW-basis} for a quadratic operad $\Pcal=\QO{M}{R}$ is a basis $B^\Pcal\subset B^{\Free(M)}$ of $\Pcal$ satisfying the conditions:
\begin{enumerate}
 \item \label{conditionzero}
 $\bbone \in B^{\Pcal}$ and $B^M\subset B^{\Pcal}$,
 \item \label{conditionone}
 for $\alpha\in B^{\Free(M)}$, either $\alpha\in B^\Pcal$ or the elements of the basis $\gamma_i\in B^{\Pcal}$ which appear in the unique decomposition $\alpha\equiv\sum_i c^i \gamma_i$, satisfy $\gamma_i>\alpha$,
 \item \label{conditiontwo}a decorated tree $\alpha\in B^{\Free(M)}_\tau$ is in $B^{\Pcal}_\tau$ if and only if for every internal edge $e$ of $\tau$, the restricted decorated tree $\alpha|_{\tau_e}$ is in $B^{\Pcal}_{\tau_e}$.
\end{enumerate}
\end{defn*}

The following result makes it easier to verify that a given basis is a PBW-basis.

\begin{prop}[Hoffbeck]
\label{twovertexcondition}
Let $M$ be finitely generated. If condition \eqref{conditionone} is verified when the underlying tree of $\alpha$ has two vertices and conditions \eqref{conditionzero} and \eqref{conditiontwo} are satisfied, then condition \eqref{conditionone} is satisfied for all $\alpha$.
\end{prop}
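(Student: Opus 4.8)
The plan is to prove condition~\eqref{conditionone} for all decorated trees $\alpha$ by a term‑rewriting argument, organised, for each fixed arity $n$, as a descending induction along the total order on $B^{\Free(M)}(n)$. The first point is that, since $M$ is finitely generated, $B^{\Free(M)}(n)$ is a \emph{finite} set, so its order has no infinite ascending chains; this is precisely where that hypothesis is needed, as it is what guarantees that the reductions below terminate and that the descending induction is legitimate. The base cases, and more generally every $\alpha$ whose underlying tree $\tau$ has fewer than two vertices, are immediate: then $\tau$ has no internal edges, $\alpha$ is $\bbone$ or a generator, and condition~\eqref{conditionzero} gives $\alpha\in B^{\Pcal}$, so condition~\eqref{conditionone} holds trivially for it.

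For the inductive step I would fix $\alpha$ with underlying tree $\tau$ having at least two vertices, assume condition~\eqref{conditionone} for every $\beta\in B^{\Free(M)}(n)$ with $\beta>\alpha$, and suppose $\alpha\notin B^{\Pcal}$. Then by condition~\eqref{conditiontwo} there is an internal edge $e$ of $\tau$ for which the restricted decorated tree $\alpha|_{\tau_e}$, which sits on the two‑vertex tree $\tau_e$, does not lie in $B^{\Pcal}_{\tau_e}$. Applying the two‑vertex case of condition~\eqref{conditionone} --- the hypothesis of the proposition --- to $\alpha|_{\tau_e}$ yields, in $\Pcal$, an identity $\alpha|_{\tau_e}\equiv\sum_i c^i\gamma_i$ with each $\gamma_i\in B^{\Pcal}$ supported on a two‑vertex tree and $\gamma_i>\alpha|_{\tau_e}$ (the sum possibly empty); equivalently $\alpha|_{\tau_e}-\sum_i c^i\gamma_i\in R$, since $(R)\cap\Free_{(2)}(M)=R$ as $(R)$ is generated in weight two. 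Grafting the part of $\alpha$ lying outside $\tau_e$ back onto this identity and using that $(R)$ is an operadic ideal, I obtain $\alpha\equiv\sum_i c^i\tilde\gamma_i$ in $\Pcal$, where $\tilde\gamma_i\in B^{\Free(M)}(n)$ is the decorated tree obtained from $\alpha$ by substituting $\gamma_i$ for its $\tau_e$‑part. Since the monomial order is compatible with operadic composition in $\Free(M)$ (\S\ref{PBWparagraph}, cf.~\cite{Hoffbeck2008}) and only the $\tau_e$‑part has been altered, $\gamma_i>\alpha|_{\tau_e}$ upgrades to $\tilde\gamma_i>\alpha$ for every $i$.

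It then remains to dispose of the $\tilde\gamma_i$ that are not themselves in $B^{\Pcal}$: for each such one the induction hypothesis applies (as $\tilde\gamma_i>\alpha$) and expresses it in $\Pcal$ as a combination of elements of $B^{\Pcal}$ each strictly larger than $\tilde\gamma_i$, hence than $\alpha$. Substituting these expansions into $\alpha\equiv\sum_i c^i\tilde\gamma_i$ and collecting terms --- any cancellations being harmless --- yields $\alpha\equiv\sum_k e^k\gamma_k$ with distinct $\gamma_k\in B^{\Pcal}$, all satisfying $\gamma_k>\alpha$. Because $B^{\Pcal}$ is a basis of $\Pcal$, this is \emph{the} decomposition of $\alpha$ referred to in condition~\eqref{conditionone}, which therefore holds for $\alpha$, closing the descending induction.

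The step I expect to be the main obstacle is the ``lifting'' in the second paragraph --- turning a two‑vertex reduction of $\alpha|_{\tau_e}$ into a reduction of $\alpha$ itself along $e$. It rests on two facts furnished by Hoffbeck's framework: that $(R)$ is concentrated in weight two, so that a quadratic relation can genuinely be substituted into a subtree, and, more essentially, that the length‑lexicographic order on decorated trees is compatible with grafting, so that enlarging the $\tau_e$‑part enlarges the whole tree. Everything else is bookkeeping. I would also note that, in contrast to the Bergman diamond lemma, no confluence analysis is needed here: the uniqueness of the decomposition in condition~\eqref{conditionone} is provided directly by $B^{\Pcal}$ being a basis, so it suffices to exhibit a single reduction of $\alpha$ to strictly larger basis elements.
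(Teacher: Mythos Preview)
The paper does not give a proof of this proposition; it is stated as a result of Hoffbeck and attributed to \cite{Hoffbeck2008} without argument, so there is nothing in the paper to compare your proof against directly.

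That said, your argument is essentially the one Hoffbeck uses and is correct: finiteness of $B^{\Free(M)}(n)$ makes the reverse order well-founded, condition~\eqref{conditiontwo} locates a bad two-vertex subtree, the two-vertex hypothesis rewrites it upward, and compatibility of the order with operadic grafting lifts the inequality to the full tree. One small remark on presentation: what you call the ``base case'' (trees with fewer than two vertices) is not the base of the descending induction --- the maximal elements are --- but since your inductive step already handles the case of an empty rewriting sum, and since trees with fewer than two vertices lie in $B^{\Pcal}$ by condition~\eqref{conditionzero} regardless of their position in the order, this is only a matter of exposition and does not affect the validity of the argument.
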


Using a filtration induced by the ordering on $\Free(M)$ Hoffbeck was able to prove the following theorem.

\begin{thm}[Hoffbeck]
A reduced operad which has a PBW basis is Koszul.
\end{thm}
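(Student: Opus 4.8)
The plan is to adapt to operads the classical argument that a quadratic object equipped with a Poincar\'e--Birkhoff--Witt basis is Koszul: use the basis to produce an associated graded \emph{monomial} operad, verify separately that monomial operads are Koszul, and then transport Koszulness back along a spectral sequence. Throughout, the reducedness of $\Pcal=\QO{M}{R}$ is used to guarantee that every element of $\Free(M)$ is a reduced decorated tree, so the planar representations and the order on $B^{\Free(M)}$ recalled in \S\ref{PBWparagraph} are all available.

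First I would use the order, which is compatible with operadic composition (\S\ref{PBWparagraph}), to equip $\Pcal$ with a filtration by sub-operads: at level $p$, the span of the images in $\Pcal$ of the decorated trees of order-index at least $p$. Since each $\Pcal(n)$ has only finitely many decorated trees in any fixed weight (number of vertices), this filtration is finite, hence exhaustive, in each arity and weight. The crucial structural point is that the associated graded operad is the monomial operad on the PBW-normal forms, $\mathrm{gr}\,\Pcal\cong\Pcal^{\mathrm{in}}:=\QO{M}{\mathrm{in}(R)}$, where $\mathrm{in}(R)\subset\Free_{(2)}(M)$ is spanned by the two-vertex trees not lying in $B^\Pcal$ (equivalently, by the order-minimal terms of the elements of $R$), so that $\Pcal^{\mathrm{in}}$ has $\K$-basis exactly $B^\Pcal$. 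Indeed, condition \eqref{conditionone} says every non-normal tree equals, modulo $R$, a combination of strictly order-larger normal trees; this both shows $B^\Pcal$ spans $\Pcal$ and makes every non-normal tree vanish in $\mathrm{gr}\,\Pcal$ while the classes of the normal trees remain linearly independent. Condition \eqref{conditiontwo} then says $B^\Pcal$ is cut out from $B^{\Free(M)}$ by the two-vertex restricted-tree conditions alone, i.e. no relations beyond $\mathrm{in}(R)$ are forced in higher weight, so the composition on $\mathrm{gr}\,\Pcal$ is exactly that of the monomial operad $\Pcal^{\mathrm{in}}$. Proposition \ref{twovertexcondition} is what keeps the verification finite, reducing condition \eqref{conditionone} to two-vertex trees, i.e. to the quadratic data $R$.

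Next I would establish the auxiliary fact that every reduced quadratic monomial operad $\Qcal$ is Koszul. For such a $\Qcal$, the bar construction $\BC(\Qcal)$ splits as a direct sum, over decorated trees $\tau$, of subcomplexes governed by the combinatorics of decomposing $\tau$ along a chosen subset of its internal edges, admissibility of a decomposition being dictated by the monomial relations; on each such piece one writes down an explicit contracting homotopy built from the smallest admissible internal edge (equivalently, a discrete Morse matching), showing that the homology sits in top weight, with the critical cells exactly the trees all of whose two-vertex restrictions are relations. Unwinding this is precisely the statement that the inclusion $\Qcal^{\antishriek}\hookrightarrow\BC(\Qcal)$ of the (again monomial) Koszul dual cooperad is a quasi-isomorphism.

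Finally I would transport Koszulness back to $\Pcal$. The filtration on $\Pcal$ induces one on $\BC(\Pcal)$ compatible with the bar differential, whose composition-induced part $d'$ respects it, yielding a spectral sequence with $E^0$-page $\BC(\mathrm{gr}\,\Pcal)=\BC(\Pcal^{\mathrm{in}})$ carrying its own bar differential. By the previous step the $E^1$-page, $(\Pcal^{\mathrm{in}})^{\antishriek}$, is concentrated on the diagonal (bar-degree equal to weight); since the differentials of the spectral sequence preserve the weight grading while lowering the bar-degree by one, they vanish from $E^1$ onward, and, the filtration being finite in each arity and weight, the sequence converges, forcing $\Ho_s(\BC(\Pcal))_{(t)}=0$ for $s\neq t$. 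Hence $\Pcal^{\antishriek}\hookrightarrow\BC(\Pcal)$ is a quasi-isomorphism and $\Pcal$ is Koszul. I expect the genuinely hard step to be the auxiliary fact that monomial operads are Koszul: constructing the homotopy on the tree-indexed pieces of the bar complex and checking that it is well defined and contracting is the real combinatorial content, whereas the identification $\mathrm{gr}\,\Pcal\cong\Pcal^{\mathrm{in}}$ merely repackages the PBW axioms and the spectral-sequence transfer is routine once convergence is in hand.
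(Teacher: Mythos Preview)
The paper does not give its own proof of this theorem: it is stated as Hoffbeck's result, with only the one-line indication ``Using a filtration induced by the ordering on $\Free(M)$ Hoffbeck was able to prove the following theorem.'' Your proposal is a faithful and correct reconstruction of precisely that argument from \cite{Hoffbeck2008}---filter by the PBW order, identify the associated graded with a monomial operad via conditions \eqref{conditionone} and \eqref{conditiontwo}, prove directly that monomial operads are Koszul, and transfer via the bar spectral sequence---so it is entirely in line with what the paper points to.
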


\subsection{Koszulness of the operad of Nijenhuis structures}

The Koszul dual operad of $\Nijenhuis$ is described in \cite{Merkulov2005}. It has the operads $\Perm$, of permutative algebras (see e.g.~\cite{Chapoton2001}), and $\Com$ as suboperads. Let $\PLsmall$ and $\PLopsmall=\PLsmall(12)$ denote the generators of $\Perm$ and let $\Ysmall$ denote the generator of the $\Com$.

\begin{rem*}
We use the notation
\[
\text{$\PLop{1}{2}$ instead of $\PL{2}{1}$}
\]
in order to write the trees using the planar representation of \S \ref{PBWparagraph}.
\end{rem*}

\begin{thm}
 The following is a PBW-basis of $\Nijenhuis^{!}$ with respect to the ordering $\PLopsmall < \Ysmall < \PLsmall$
\[
B^{\Nijenhuis}=
\left\{
\nijPBWa
\right\}_{\substack{i_1<\dotsb<i_r \\ j_1< \dotsb <j_s}}
\hspace{-5pt}\cup\hspace{15pt}
\left\{
\nijPBWb
\right\}_{\substack{i_1<\dotsb<i_r \\ j_1< \dotsb <j_s}}.
\]
\end{thm}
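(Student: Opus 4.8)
The plan is to verify the three defining conditions of a PBW-basis for $\Nijenhuis^{!}$ directly, using Proposition \ref{twovertexcondition} to reduce condition \eqref{conditionone} to two-vertex trees. First I would record the structure of $\Nijenhuis^{!}$ explicitly: by \eqref{corkoszuldualkoszuldualoperad} and the description in \cite{Merkulov2005}, $\Nijenhuis^{!}$ is generated by the symmetric operation $\Ysmall$ (generating $\Com$) and the two operations $\PLsmall,\PLopsmall$ (generating $\Perm$), with relations that are the orthogonal complements of $R_{\text{PL}}\cup R_{\text{L}}\cup R_{\text{C}}$. Concretely this means: in arity $3$ the $\Com$-part is one-dimensional (the totally symmetric tree), the $\Perm$-part realises permutative associativity (so in any composite of two $\PLsmall$/$\PLopsmall$ vertices only the leftmost leaf ``survives'', giving the $r$-fold left-combs decorated appropriately in the proposed basis), and the mixed relations $R_{\text{C}}^{\perp}$ dictate how a $\Ysmall$ sitting above or below a pre-Lie vertex may be rewritten. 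From this one reads off that the monomials which are \emph{not} reducible — i.e. the candidate PBW-basis elements — are exactly the two families displayed: a left-comb of pre-Lie vertices (all of type $\PLsmall$ except possibly a single $\PLopsmall$ at the appropriate spot, matching $i_1<\dots<i_r$, $j_1<\dots<j_s$), topped off by at most one $\Com$-vertex $\Ysmall$, with the two families distinguished by whether the topmost vertex on the root path is of pre-Lie or of Lie type.

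The verification then proceeds in the order dictated by the definition. \textbf{Step 1 (condition \eqref{conditionzero}):} trivially $\bbone$ and the generators $\Ysmall,\PLsmall,\PLopsmall$ lie in the listed set. \textbf{Step 2 (condition \eqref{conditiontwo}):} I would check that a decorated tree lies in the proposed basis if and only if each of its two-vertex restrictions $\alpha|_{\tau_e}$ does; this is a combinatorial bookkeeping argument — the two-vertex ``allowed'' restricted trees are precisely the non-leading monomials appearing in no relation, namely $\Ysmall$-over-$\Ysmall$ in the symmetric arrangement, $\PLsmall$/$\PLopsmall$ left-combs, and the specific mixed configurations permitted by $R_{\text{L}}^{\perp}$ and $R_{\text{C}}^{\perp}$ — and one checks that gluing such local pieces produces exactly the global shapes in the two families, and conversely. \textbf{Step 3 (condition \eqref{conditionone} via Proposition \ref{twovertexcondition}):} since $M = E_{\text{PL}}\oplus E_{\text{L}}$ is finitely generated, it suffices to treat two-vertex trees $\alpha$; for each two-vertex decorated tree not in the basis I would exhibit its rewriting modulo the relations and check that every basis monomial $\gamma_i$ occurring has $\gamma_i > \alpha$ in the length-lexicographic order on sequences of root-paths induced by $\PLopsmall < \Ysmall < \PLsmall$. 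This is where the choice of ordering is doing real work: the pre-Lie relation rewrites a ``wrong-associated'' product as a sum of ``correctly-associated'' ones, and the ordering must be such that the leading term of each relation is the one being eliminated — so I would go relation by relation ($R_{\text{PL}}^{\perp}$, $R_{\text{L}}^{\perp}$, $R_{\text{C}}^{\perp}$) and confirm the leading term is killed.

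The main obstacle I expect is \textbf{Step 3 for the mixed relations $R_{\text{C}}^{\perp}$}: the pure $\Perm$ and pure $\Com$ parts are classical (these are known Koszul operads with known PBW-bases, so those computations are routine and can be cited from \cite{Chapoton2001}), but the compatibility relations mixing $\Ysmall$ with $\PLsmall$/$\PLopsmall$ are precisely the ones that made earlier methods fail, and here one must check by hand that with the ordering $\PLopsmall < \Ysmall < \PLsmall$ every two-vertex mixed monomial outside the basis rewrites into strictly larger basis monomials. A secondary (purely notational) subtlety is that the planar-representation conventions of \S\ref{PBWparagraph} force the use of $\PLopsmall$ rather than $\PLsmall(12)$, so I would be careful throughout that the left-to-right ordering of leaves in the combs in the two displayed families is the one induced by the minimal-leaf-label rule, which is what guarantees the restricted-tree condition \eqref{conditiontwo} is checkable locally. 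Once all three conditions are verified, Hoffbeck's theorem gives that $\Nijenhuis^{!}$, being reduced (as $M(0)=0$) and possessing a PBW-basis, is Koszul, and hence so is $\Nijenhuis$.
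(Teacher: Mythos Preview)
Your overall strategy is exactly the paper's: verify conditions \eqref{conditionzero}--\eqref{conditiontwo} directly, invoking Proposition~\ref{twovertexcondition} to reduce \eqref{conditionone} to two-vertex trees, and then check relation by relation that every non-basis two-vertex monomial rewrites into strictly larger basis elements under $\PLopsmall < \Ysmall < \PLsmall$. The paper does precisely this, listing the seven arity-$3$ basis elements $\gamma_1,\dots,\gamma_7$ and asserting the required inequality.

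However, your verbal description of the basis is inverted, and this would derail your verification of condition~\eqref{conditiontwo}. The displayed combs are \emph{not} ``pre-Lie combs topped by at most one $\Ysmall$''. Each comb has a block of $\Ysmall$ vertices on top (indexed by $i_1<\dots<i_r$, so arbitrarily many), followed below by a block of $\PLsmall$ vertices (indexed by $j_1<\dots<j_s$); the second family carries one additional $\PLopsmall$ at the very top, and that is what distinguishes the two families --- not whether the topmost vertex is of Lie or pre-Lie type. Correspondingly, the allowed two-vertex restrictions are the seven patterns $\gamma_1,\dots,\gamma_7$ the paper lists ($\Ysmall$-over-$\PLsmall$ twice, $\PLopsmall$-over-$\Ysmall$, $\Ysmall$-over-$\Ysmall$, $\PLsmall$-over-$\PLsmall$, and $\PLopsmall$-over-$\PLsmall$ twice), and gluing those yields the stated combs, not the shape you described. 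Once you correct this picture, your Steps~1--3 go through and coincide with the paper's argument.
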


\begin{proof}
Through straightforward graph calculations one can verify that $B^{\Nijenhuis^{!}}$ is a basis of $\Nijenhuis^{!}$, cf.~\S 3.6 of \cite{Merkulov2005} and note that the dotted edges here correspond to the wavy edges in \cite{Merkulov2005}. Condition \eqref{conditionzero} is obviously satisfied and condition \eqref{conditiontwo} is easily verified. We denote the elements of $B^{\Nijenhuis^{!}(3)}$ by
\[
\begin{split}
\gamma_1=\YPLY{1}{2}{3},\shs \gamma_2=\YPLY{1}{3}{2},\shs \gamma_3=\PLYtwY{1}{2}{3},\shs \gamma_{4}=\YY{1}{2}{3}\\
\gamma_5=\PLYPLY{1}{2}{3},\shs \gamma_6=\PLYtwPLY{1}{2}{3},\shs \gamma_7=\PLYtwPLY{1}{3}{2}.
\end{split}
\]
To show that \eqref{conditionone} is satisfied it is sufficient, using Proposition \ref{twovertexcondition}, to observe that for any decorated two-vertex graph $\alpha\in B^{\Free(M)}\setminus B^{\Nijenhuis^{!}}$ with $\alpha=\sum c_i \gamma_i$, $c_i\in\K$, we have $c_i\neq 0\implies\gamma_i > \alpha$. Here $M=\K\PLsmall\oplus\K\PLopsmall\oplus\K\Ysmall$.
\end{proof}

\begin{cor}
The operad $\Nijenhuis^{!}$, and thus also $\Nijenhuis$, is Koszul.
\end{cor}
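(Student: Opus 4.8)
The plan is to obtain this corollary immediately from the preceding theorem by invoking Hoffbeck's criterion, and then to transport Koszulness from $\Nijenhuis^{!}$ to $\Nijenhuis$ via quadratic biduality. First I would check that $\Nijenhuis^{!}$ is a reduced operad in the sense of \S\ref{PBWparagraph}: its generating $\s$-module $M = \K\PLsmall \oplus \K\PLopsmall \oplus \K\Ysmall$ is concentrated in arity two, so $M(0)=0$ and every non-zero decorated tree in $\Free(M)$ is reduced; hence $\Nijenhuis^{!}$ is spanned by reduced trees. Since the preceding theorem exhibits an explicit PBW-basis $B^{\Nijenhuis^{!}}$ of $\Nijenhuis^{!}$ with respect to the order $\PLopsmall < \Ysmall < \PLsmall$, Hoffbeck's theorem (``a reduced operad which has a PBW basis is Koszul'') applies verbatim, and we conclude that $\Nijenhuis^{!}$ is Koszul.

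To deduce that $\Nijenhuis$ itself is Koszul, I would appeal to the standard biduality for quadratic operads with finite-dimensional components. Because $\Nijenhuis = \QO{E_{\text{PL}}\oplus E_{\text{L}}}{R_{\text{PL}}\cup R_{\text{L}}\cup R_{\text{C}}}$ is finite-dimensional in each arity, the orthogonality relation $(R^{\perp})^{\perp}=R$ under the pairing on $\Free_{(2)}(M)$ gives a canonical isomorphism $(\Nijenhuis^{!})^{!}\iso\Nijenhuis$, and dualizing \eqref{corkoszuldualkoszuldualoperad} identifies $(\Nijenhuis^{\antishriek})_{(s)}(n)$ with $\Sigma^{-s}((\Nijenhuis^{!})_{(s)}(n))^{\cz}$. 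Under this identification the canonical inclusion $\Nijenhuis^{\antishriek}\hookrightarrow\BC(\Nijenhuis)$ is a quasi-isomorphism if and only if $(\Nijenhuis^{!})^{\antishriek}\hookrightarrow\BC(\Nijenhuis^{!})$ is — that is, $\Nijenhuis$ is Koszul if and only if $\Nijenhuis^{!}$ is (the classical Ginzburg--Kapranov symmetry of Koszulness under Koszul duality). Combining this with the previous paragraph gives that $\Nijenhuis$ is Koszul.

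I do not expect any real obstacle in the corollary itself: all of the substantive work was carried out in the proof of the theorem, where the delicate steps were verifying that $B^{\Nijenhuis^{!}}$ is actually a basis of $\Nijenhuis^{!}$ and that condition \eqref{conditionone} holds for two-vertex trees, so that Proposition \ref{twovertexcondition} promotes it to all trees. The only points requiring any care here are bookkeeping ones — confirming that the hypotheses of Hoffbeck's theorem (reducedness) and of the biduality (arity-wise finite-dimensionality) are satisfied — and both hold trivially for $\Nijenhuis^{!}$ and $\Nijenhuis$.
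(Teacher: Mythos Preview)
Your proposal is correct and matches the paper's intended argument: the corollary is stated without proof because it follows immediately from the preceding theorem via Hoffbeck's criterion (a reduced operad with a PBW-basis is Koszul) together with the standard fact that a quadratic operad is Koszul if and only if its Koszul dual is. The additional bookkeeping you supply (reducedness, arity-wise finite-dimensionality, biduality) is exactly the implicit content the paper leaves to the reader.
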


Together with Theorem \ref{Merkulovsthm} we obtain.

\begin{cor}
 There is a one-to-one correspondence between representations of $\Nijenhuis_\infty$ in $V$ and pointed \Nijinfty\ structures on the formal graded manifold associated to $V$.
\end{cor}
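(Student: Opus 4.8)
The plan is to deduce the corollary immediately from two facts already at our disposal: the Koszulness of $\Nijenhuis$, just established in the previous corollary, and Merkulov's geometric Theorem~\ref{Merkulovsthm}. The only thing that needs to be said is that once $\Nijenhuis$ is known to be Koszul, the cobar construction $\Omega(\Nijenhuis^{\antishriek})$ on its Koszul dual cooperad is precisely the minimal resolution $\Nijenhuis_\infty$; then the bijection of Theorem~\ref{Merkulovsthm} between representations of $\Omega(\Nijenhuis^{\antishriek})$ and pointed \Nijinfty\ structures is literally the bijection asserted here.

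In more detail, I would first invoke the result of \cite{Ginzburg1994} recalled in \S\ref{resolutionsofoperads}: for a Koszul operad $\Pcal$, the canonical map $\Omega(\Pcal^{\antishriek})\to\Pcal$ is a quasi-free resolution. By the previous corollary $\Nijenhuis$ is Koszul, so this applies. Next I would observe that this resolution is minimal, i.e.\ that the component $d'$ of its differential satisfies $d'(\Sigma\ol{\Nijenhuis^{\antishriek}})\subset\Free_{(2)}(\Sigma\ol{\Nijenhuis^{\antishriek}})$. This is automatic because $\Nijenhuis=\QO{E_{\text{PL}}\oplus E_{\text L}}{R_{\text{PL}}\cup R_{\text L}\cup R_{\text C}}$ is quadratic: $\Nijenhuis^{\antishriek}$ is then concentrated in the appropriate weights $(\Nijenhuis^{\antishriek})_{(s)}$, and the cobar differential induced by the cocomposition raises the number of vertices by exactly one. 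Hence $\Omega(\Nijenhuis^{\antishriek})=\Nijenhuis_\infty$, as recalled in the introduction.

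Finally I would substitute this identification into Theorem~\ref{Merkulovsthm}: that theorem gives a one-to-one correspondence between representations of $\Omega(\Nijenhuis^{\antishriek})$ in $V$ and pointed \Nijinfty\ structures on the formal graded manifold associated to $V$, and rewriting $\Omega(\Nijenhuis^{\antishriek})$ as $\Nijenhuis_\infty$ yields the stated correspondence.

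I do not anticipate a real obstacle here: the entire mathematical weight of the corollary has been transferred to the Koszulness theorem, which is the nontrivial result proved via the PBW-basis of \S\ref{PBWparagraph}, and to Merkulov's Theorem~\ref{Merkulovsthm}, which is quoted. The single point deserving a line of care is the identification of $\Omega(\Nijenhuis^{\antishriek})$ with the \emph{minimal} model $\Nijenhuis_\infty$, but this is a formal consequence of quadraticity together with the Ginzburg--Kapranov theorem, as indicated in \S\ref{resolutionsofoperads}.
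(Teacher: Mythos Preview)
Your proposal is correct and matches the paper's own argument exactly: the paper simply writes ``Together with Theorem~\ref{Merkulovsthm} we obtain'' the corollary, relying on the Koszulness just established to identify $\Omega(\Nijenhuis^{\antishriek})$ with $\Nijenhuis_\infty$. Your added remarks on minimality are a harmless elaboration of what the paper leaves implicit.
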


Thus the differential given in \S 3.6 of \cite{Merkulov2005} is really the differential of the minimal resolution of $\Nijenhuis$. We note that Theorem \ref{nongradednijenhuisthm} is the special case when $V$ equals $\R^n$.

\subsection{The Koszul dual operad of $\Binijenhuis$}

\begin{prop}
The Koszul dual operad of $\Binijenhuis$ is the quadratic operad
\[
\Binijenhuis^{!}=\QO{N}{S}.
\]
Here $N=\{N(n)\}_{n\geq 0}$ is the $\smodule$ given by
\[
N(n)=
\begin{cases}
\sgn_2[1]\oplus\K[\s_2]\otimes\sgn_2\oplus\K[\s_2]\otimes\sgn_2 & \text{ if $n=2$} \\
0 & \text{otherwise}.
\end{cases}
\]
If we denote the natural generator of $\sgn_2[-1]$ by $\Ysmall$ and the generators of $\K[\s_2]\otimes\sgn_2\oplus\K[\s_2]\otimes\sgn_2$ by $\PLWsmall$ and $\PLBsmall$ the relations $S$ are given by
\[
\begin{split}
 &\PLWPLW{a}{b}{c} - \PLWPLWop{a}{b}{c}, \shs \PLWPLW{a}{b}{c} - \PLWPLW{a}{c}{b}, \shs \PLBPLB{a}{b}{c} - \PLBPLBop{a}{b}{c}, \shs \PLBPLB{a}{b}{c} - \PLBPLB{a}{c}{b},\\
 &\PLWPLB{a}{b}{c} - \PLWPLBop{a}{b}{c}, \shs \PLWPLB{a}{b}{c} - \PLWPLB{a}{c}{b}, \shs \PLBPLW{a}{b}{c} - \PLBPLWop{a}{b}{c},\shs \PLBPLW{a}{b}{c} - \PLBPLW{a}{c}{b}, \\
 &\PLWPLB{a}{b}{c} - \PLBPLW{a}{b}{c}, \shs \PLWPLBop{a}{b}{c} - \PLBPLWop{a}{b}{c}, \shs \YY{a}{b}{c} - \YYop{a}{b}{c},\shs
 \PLWY{a}{b}{c}-\YPLW{a}{c}{b},\\
 & \YPLWop{a}{b}{c}-\YPLW{a}{c}{b}+\YPLW{a}{b}{c},\shs \PLBY{a}{b}{c}-\YPLB{a}{c}{b}, \shs \YPLBop{a}{b}{c}-\YPLB{a}{c}{b}+\YPLB{a}{b}{c}.
\end{split}
\]
\end{prop}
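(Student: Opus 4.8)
The plan is to evaluate the definition $\Binijenhuis^{!}=\QO{M^\cz}{R^\perp}$ of \S\ref{resolutionsofoperads} on $\Binijenhuis=\QO{M}{R}$. First I would compute the Czech dual $\smodule$ $M^\cz$; this just amounts to dualizing each of the three summands of $M(2)$ and twisting by $\sgn_2$, giving $(\bbone_2[-1])^*\otimes\sgn_2=\sgn_2[1]$ and $(\K[\s_2])^*\otimes\sgn_2\iso\K[\s_2]\otimes\sgn_2$ for each of the two copies, so that $M^\cz=N$. The induced pairing $\langle\_,\_\rangle$ identifies the generator of $\sgn_2[1]$ as the dual of $\Ysmall$ and the two copies of $\K[\s_2]\otimes\sgn_2$ as the duals of $\PLWsmall$ and $\PLBsmall$; this fixes the names of the generators of $N$.

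The heart of the matter is the computation of $R^\perp\subseteq\Free_{(2)}(M^\cz)(3)$ (the only nontrivial arity, since $M$ is binary). The observation that makes this tractable is that $\Free_{(2)}(M)(3)$ and $\Free_{(2)}(M^\cz)(3)$ each split as a direct sum of nine \emph{blocks}, indexed by the ordered pair of generators from $\{\Ysmall,\PLWsmall,\PLBsmall\}$ decorating the lower and upper vertex of a two-vertex tree, and that the pairing $\langle\_,\_\rangle$ vanishes on a pair of blocks unless their decorations are dual, i.e.~the pairing is block-diagonal. Moreover $R$ respects this decomposition: the odd Jacobi relation lies in the $(\Ysmall,\Ysmall)$-block; the pre-Lie relations for $\PLWsmall$, resp.~$\PLBsmall$, lie in the $(\PLWsmall,\PLWsmall)$-, resp.~$(\PLBsmall,\PLBsmall)$-block; the compatibility relation of the two pre-Lie products lies in the $(\PLWsmall,\PLBsmall)$- and $(\PLBsmall,\PLWsmall)$-blocks; and the two Nijenhuis compatibility relations lie in the $(\Ysmall,\PLWsmall)/(\PLWsmall,\Ysmall)$- and $(\Ysmall,\PLBsmall)/(\PLBsmall,\Ysmall)$-blocks. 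Since $\s_3$ permutes leaves and not decorations, these block-groups partition the nine blocks, so $R^\perp$ is the direct sum of the orthogonal complements computed within each block-group separately.

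Each of those orthogonal complements is a computation already recorded elsewhere. The $(\Ysmall,\Ysmall)$-piece recovers the Koszul dual of the odd Lie operad $\Lieone$, contributing the relation $\YY{a}{b}{c}-\YYop{a}{b}{c}$; the $(\PLWsmall,\PLWsmall)$- and $(\PLBsmall,\PLBsmall)$-pieces recover $\Prelie^{!}=\Perm$ \cite{Chapoton2001}, contributing the two permutative relations for each of $\PLWsmall$ and $\PLBsmall$; the $(\PLWsmall,\PLBsmall)/(\PLBsmall,\PLWsmall)$-piece recovers the Koszul dual of the operad of compatible pre-Lie algebras \cite{Strohmayer2008a}, contributing the mixed permutative relations together with the coincidence relations $\PLWPLB{a}{b}{c}-\PLBPLW{a}{b}{c}$ and $\PLWPLBop{a}{b}{c}-\PLBPLWop{a}{b}{c}$; and each of the two mixed Lie/pre-Lie pieces recovers the mixed part of $\Nijenhuis^{!}$ (computed by Merkulov \cite{Merkulov2005} and re-derived above via the PBW basis of $\Nijenhuis^{!}$), contributing $\PLWY{a}{b}{c}-\YPLW{a}{c}{b}$ and $\YPLWop{a}{b}{c}-\YPLW{a}{c}{b}+\YPLW{a}{b}{c}$, and likewise with $\PLBsmall$ in place of $\PLWsmall$. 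Taking $\s_3$-spans and assembling these yields exactly the list $S$; a dimension count — summing the identity $\dim R_B+\dim R_B^{\perp}=\dim\Free_{(2)}(M)(3)$ over the blocks $B$, with $R_B$ the part of $R$ in the block-group of $B$ — confirms that $S$ spans all of $R^\perp$.

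The main obstacle I anticipate is not any single step but the sign bookkeeping. Since $\bbone_2[-1]$ and its Czech dual $\sgn_2[1]$ have opposite parity, every mixed relation in the $(\Ysmall,\PLWsmall)$- and $(\Ysmall,\PLBsmall)$-blocks must be checked with the Koszul sign rule, and the $\sgn_2$-twist in the Czech dual must be tracked carefully through the pairing. Once the block decomposition is in place, however, each block is either elementary or a citation, so the residual work is a finite, if slightly tedious, amount of graph calculus.
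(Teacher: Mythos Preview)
Your proposal is correct and follows the same two-step scheme as the paper: identify $N=M^{\cz}$ and then check that $(S)=R^\perp$ under the pairing of \S\ref{resolutionsofoperads}. The paper's proof is the terse two-liner ``$N=M^{\cz}$; it is easy to verify that $(S)$ is the orthogonal complement to $(R)$'', so your block-group decomposition and appeals to the known duals $\Lieone^{!}$, $\Prelie^{!}=\Perm$, the compatible pre-Lie dual, and the mixed part of $\Nijenhuis^{!}$ are simply a structured way of carrying out that verification rather than a different argument.
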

\begin{proof}
For $\Binijenhuis=\QO{M}{R}$ we first note that $N=M^{\cz}$. Under the pairing of \S\ref{resolutionsofoperads} it is easy to verify that $(S)$ indeed is the orthogonal complement to $(R)$.
\end{proof}

For $H$ a subgroup of $G$ and $M$ and $H$-module we define $\Ind^G_H M:=\K[G]\otimes_{\K[H]} M$. Through straightforward graph calculations we obtain the following result.

\begin{prop}
\label{koszuldualbasis}
The underlying $\s$-module of $\Binijenhuis^{!}$ is given by
\begin{multline*}
\Binijenhuis^{!}(n)=\\
\begin{cases}
\bigoplus_{0\leq p \leq n-1}(\Ind^{\s_{n}}_{\s_{n-p}\times\s_{p}}\sgn_{n-p}\otimes
(\underbrace{\bbone_{p}\oplus\dotsb\oplus\bbone_{p}}_{\text{$p+1$ terms}})[n-1-p])   & \text{if $n\geq2$}\\
0 & \text{otherwise}.
\end{cases}
\end{multline*}
Explicitly, a $\K$-basis for $\Binijenhuis^{!}(n)$ is given by
\begin{equation}
\label{nijenhuiskoszuldualoperadbasis}
\left\{ \shs \YWBcorolla \shs \right\}_{\substack{i\geq 1, j\geq 0 \\  (i,n-i)\text{-shuffles} \shs \sigma } }.
\end{equation}
\end{prop}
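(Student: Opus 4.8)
The plan is to compute the underlying $\s$-module of $\Binijenhuis^{!}$ directly from the presentation $\QO{N}{S}$ given in the preceding proposition, by analysing which decorated trees survive modulo the ideal $(S)$. First I would fix, as in the Nijenhuis case, the planar representation of \S\ref{PBWparagraph} and a monomial order on $B^{\Free(N)}$ with $\PLopsmall$, $\PLWsmall$, $\PLBsmall$, $\Ysmall$ suitably ordered; the relations $S$ are then treated as rewriting rules. The key structural observations are: (i) the relations $\PLWPLW{a}{b}{c} - \PLWPLWop{a}{b}{c}$, $\PLWPLW{a}{b}{c} - \PLWPLW{a}{c}{b}$ (and the analogues with $\bullet$) say that each of the two pre-Lie products is in fact \emph{permutative}, so any fully-$\PLWsmall$-or-$\PLBsmall$-decorated binary tree can be normalised to a left comb; (ii) the relations $\PLWPLB{a}{b}{c} - \PLBPLW{a}{b}{c}$ and $\PLWPLBop{a}{b}{c}-\PLBPLWop{a}{b}{c}$ identify the two colours once they sit on adjacent vertices, so a left comb built out of mixed $\circ$'s and $\bullet$'s depends only on the \emph{number} of $\bullet$-vertices, not their positions — this produces the $\bbone_{p}\oplus\dotsb\oplus\bbone_{p}$ ($p+1$ summands) appearing in the statement, where $p$ is the total number of pre-Lie vertices; (iii) the relations involving $\Ysmall$ (the last four, matching those in $\Nijenhuis^{!}$) govern how a single $\Ysmall$-vertex, which is antisymmetric, interacts with the pre-Lie combs, forcing at most one $\Ysmall$ at the ``bottom'' and yielding the $\sgn_{n-p}\otimes(-)$ twist together with the degree shift $[n-1-p]$.

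Concretely, I would argue that every element of $B^{\Free(N)}$ is equivalent modulo $(S)$ to a $\K$-linear combination of the normal-form trees depicted in~\eqref{nijenhuiskoszuldualoperadbasis}: a corolla with a block of $i\geq1$ leaves $\sigma(1),\dots,\sigma(i)$ feeding an $\Ysmall$-type vertex, then a chain of $j\geq0$ pre-Lie vertices (of either colour, but as in (ii) only the count of each colour, subject to the compatibility identification, matters) each absorbing one further leaf, and the leaves split by an $(i,n-i)$-shuffle $\sigma$. The counting then gives, for fixed $p$ pre-Lie vertices among $n$ inputs, the induced module $\Ind^{\s_n}_{\s_{n-p}\times\s_p}\sgn_{n-p}\otimes(\bbone_p^{\oplus(p+1)})$ placed in degree $n-1-p$: the $\sgn_{n-p}$ records the antisymmetry inherited from $\Ysmall$ (a generator of $\sgn_2[-1]$) on the $n-p$ ``top'' inputs, the $\s_p$-trivial factors record that the pre-Lie chain is permutative, and the $(p+1)$-fold multiplicity records the number of $\bullet$-vertices among the $p$ pre-Lie vertices. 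Summing over $0\leq p\leq n-1$ yields the claimed formula; for $n=1$ one has only $\bbone_1$ in degree $0$, and below the statement absorbs this into the $p=0$ term (note $\Binijenhuis^{!}(1)=\K=I(1)$, so the ``otherwise'' clause refers to $n=0$).

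The main obstacle is (ii)–(iii): verifying that the colour-identification relations together with the $\Ysmall$-relations really do collapse all mixed trees to the stated normal forms without further linear dependencies, and conversely that no spurious relations arise — i.e.\ that the normal-form trees are \emph{linearly independent} in $\Binijenhuis^{!}$. The clean way to secure the latter is to invoke the Koszulness machinery: it suffices to exhibit a PBW-basis for $\Binijenhuis^{!}$ (which is done in the next subsection of the paper, in the spirit of the $\Nijenhuis^{!}$ computation), since a PBW-basis is by definition a basis; the normal forms of~\eqref{nijenhuiskoszuldualoperadbasis} are precisely the PBW-basis elements. Thus in practice this proposition is the ``$\s$-module bookkeeping'' shadow of the PBW-basis theorem, and I would phrase its proof as: the rewriting rules from $S$ reduce every decorated tree to a combination of the trees in~\eqref{nijenhuiskoszuldualoperadbasis} (a finite, mechanical check on two- and three-vertex trees via Proposition~\ref{twovertexcondition}), these trees are a PBW-basis hence linearly independent, and the representation-theoretic repackaging of their count is the displayed direct-sum formula. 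The only genuinely delicate point — which I would do by explicit $3$-input graph calculation, exactly as in the proof of the $\Nijenhuis^{!}$ theorem — is checking that the overlap ambiguities among the $S$-rules are confluent, so that the normal form is well defined.
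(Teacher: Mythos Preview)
Your approach is essentially what the paper means by ``straightforward graph calculations'' --- the paper gives no argument beyond that one-line assertion --- and your unpacking of the relations $S$ into rewriting rules (permutativity of each colour, colour-exchange on adjacent vertices, the $\Ysmall$-compatibilities) is correct and is exactly how one would carry out those calculations by hand. So in spirit you are doing the same thing as the paper, only with more detail.

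There is, however, a genuine logical issue in your plan. You propose to secure linear independence of the normal forms \eqref{nijenhuiskoszuldualoperadbasis} by forward-referencing the PBW-basis theorem of the next subsection. In the paper's logical order this is circular: the proof of that theorem begins ``one can verify that $B^{\Binijenhuis^{!}}$ is a basis of $\Binijenhuis^{!}$, cf.~the basis \eqref{nijenhuiskoszuldualoperadbasis}'', i.e.\ it \emph{uses} Proposition~\ref{koszuldualbasis} as input. Under the definition of PBW-basis adopted here (``a basis \ldots\ satisfying the conditions''), being a basis is a hypothesis, not a conclusion, so you cannot bootstrap independence from the PBW statement as written. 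The fix is the one you already flag at the end: do the confluence/overlap check directly on two- and three-vertex trees, so that the rewriting system is terminating and confluent and the normal forms are therefore linearly independent in $\QO{N}{S}$, with no appeal to the later theorem.

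A smaller point: your sentence ``forcing at most one $\Ysmall$ at the bottom'' does not match the picture. In the normal form the $\Ysmall$-vertices form a chain of $i-1$ binary vertices at the \emph{top} of the tree (nearest the leaves $\sigma(1),\dots,\sigma(i)$), and the pre-Lie vertices $\circ,\bullet$ sit below them toward the root. You seem to be collapsing the associative-antisymmetric $\Ysmall$-block into a single ``$\Ysmall$-type vertex'', which is fine informally, but its position is above the pre-Lie chain, not below it; this matters when you read off which factor of $\s_{n-p}\times\s_p$ carries $\sgn$ and which carries $\bbone$.
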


\subsection{Koszulness of $\Binijenhuis$}

Through a slight modification of the basis \eqref{nijenhuiskoszuldualoperadbasis} we obtain a PBW-basis for $\Binijenhuis^{!}$.

\begin{thm}
 The following is a PBW-basis of $\Binijenhuis^{!}$ with respect to the ordering $\PLWopsmall < \PLBopsmall < \Ysmall < \PLWsmall < \PLBsmall$
\begin{multline*}
B^{\Binijenhuis^{!}}=\\
\left\{
\binijPBWa
\right\}_{\substack{i_1<\dotsb<i_r \\ j_1< \dotsb <j_{s+t}}}
\hspace{-25pt}\cup\hspace{18pt}
\left\{
\binijPBWb
\right\}_{\substack{i_1<\dotsb<i_r \\ j_1< \dotsb <j_{s+t}}}
\hspace{-31pt}\cup\hspace{10pt}
\left\{
\binijPBWc
\right\}_{\substack{i_1<\dotsb<i_r \\ j_1< \dotsb <j_s}}\hspace{-10pt}.
\end{multline*}
\end{thm}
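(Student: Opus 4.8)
The plan is to follow the argument already used for $\Nijenhuis^{!}$: verify the three conditions in the definition of a PBW-basis for the set $B^{\Binijenhuis^{!}}$ equipped with the stated order, using Proposition \ref{twovertexcondition} to reduce the check of condition \eqref{conditionone} to two-vertex trees. As a first step I would confirm that $B^{\Binijenhuis^{!}}$ really is a $\K$-basis of $\Binijenhuis^{!}$. Proposition \ref{koszuldualbasis} already provides the basis \eqref{nijenhuiskoszuldualoperadbasis}, and the three families in the theorem are obtained from it by a triangular substitution: one rewrites each mixed pre-Lie sub-corolla into the normal form dictated by the order, using the compatibility relations in $S$ (which identify a $\bullet$-over-$\circ$ composition with the corresponding $\circ$-over-$\bullet$ composition, and likewise for the opposites), and reorders the inputs of the symmetric $\Ysmall$-vertices. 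A short graph calculation, parallel to \S 3.6 of \cite{Merkulov2005}, then shows that the listed trees span and are linearly independent in each arity.

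Condition \eqref{conditionzero} is obvious, since $\bbone$ and the five generators $\PLWopsmall$, $\PLBopsmall$, $\Ysmall$, $\PLWsmall$, $\PLBsmall$ lie in $B^{\Binijenhuis^{!}}$. Condition \eqref{conditiontwo} I would handle by inspecting the internal edges of the trees in the three families: every restricted two-vertex tree that occurs is again in normal form --- an iterated pre-Lie composition of two vertices (of equal or of different colour), or a $\Ysmall$-vertex sitting below a pre-Lie vertex --- hence lies in the corresponding $B^{\Binijenhuis^{!}}_{\tau_e}$; conversely a decorated tree all of whose two-vertex restrictions are normal is visibly one of the three families.

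The substantive point is condition \eqref{conditionone}. Since the generating $\s$-module of $\Binijenhuis^{!}$ is finitely generated, Proposition \ref{twovertexcondition} reduces this to decorated trees with two vertices, i.e.~to arity three. I would list the arity-three elements $\gamma_1,\dotsc,\gamma_m$ of $B^{\Binijenhuis^{!}}$, grouped by family (the pure pre-Lie monomials in $\circ$ and $\bullet$, the mixed $\Ysmall$-over-pre-Lie monomials, and the iterated $\Ysmall$-monomial), and then run through every decorated two-vertex tree $\alpha$ in $B^{\Free(N)}\setminus B^{\Binijenhuis^{!}}$ with $N=\K\PLWopsmall\oplus\K\PLBopsmall\oplus\K\Ysmall\oplus\K\PLWsmall\oplus\K\PLBsmall$: using the relations $S$ to write $\alpha\equiv\sum_i c^i\gamma_i$ modulo the relation ideal, one checks that $c^i\neq 0$ forces $\gamma_i>\alpha$. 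This is exactly what the ordering $\PLWopsmall<\PLBopsmall<\Ysmall<\PLWsmall<\PLBsmall$ is chosen to guarantee: read as rewriting rules, the relations in $S$ are all leading-term decreasing. For the purely pre-Lie relations this is the $\Perm$ computation already used for $\Nijenhuis^{!}$, now applied to each of the four colour combinations; for the mixed $\Ysmall$/pre-Lie relations it is the computation behind the $\Nijenhuis^{!}$ theorem, carried out once with $\circ$ and once with $\bullet$; and the $\Com$-type relation for $\Ysmall$ behaves as before.

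I expect the only real obstacle to be bookkeeping: with three generating operations and the fourteen families of relations in $S$ the list of two-vertex trees to inspect is several times longer than in the Nijenhuis case, so the argument, while routine, is lengthy, and one must make sure that the triangular substitution of the first step is compatible with the normal forms invoked in conditions \eqref{conditiontwo} and \eqref{conditionone}. No genuinely new idea beyond the $\Nijenhuis^{!}$ proof is required; once the theorem is established, Hoffbeck's theorem yields that $\Binijenhuis^{!}$, and hence $\Binijenhuis$, is Koszul --- which is what the subsequent description of $\Binijenhuis_\infty$ and the proofs of Theorems \ref{introthmnongraded} and \ref{introthmgraded} depend on.
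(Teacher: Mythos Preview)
Your proposal is correct and follows essentially the same approach as the paper: verify that the given set is a $\K$-basis by comparison with \eqref{nijenhuiskoszuldualoperadbasis}, note that conditions \eqref{conditionzero} and \eqref{conditiontwo} are immediate, and invoke Proposition~\ref{twovertexcondition} to reduce condition \eqref{conditionone} to the arity-three check that every non-basis two-vertex tree rewrites into strictly larger basis elements $\gamma_i$. The paper carries this out with exactly the same structure (listing the thirteen $\gamma_i$ explicitly), so your plan matches it step for step, differing only in the amount of narrative detail you supply.
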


\begin{proof}
Through straightforward graph calculations one can verify that $B^{\Binijenhuis^{!}}$ is a basis of $\Binijenhuis^{!}$, cf.~the basis \eqref{nijenhuiskoszuldualoperadbasis}. Condition \eqref{conditionzero} is obviously satisfied and condition \eqref{conditiontwo} is easily verified. We denote the elements of $B^{\Binijenhuis^{!}(3)}$ by

\begin{align*}
&\gamma_1=\YPLW{1}{2}{3},\abshs \gamma_2=\YPLW{1}{3}{2},\abshs \gamma_3=\PLWtwY{1}{2}{3}, \abshs
\gamma_4=\YPLB{1}{2}{3},\abshs \gamma_5=\YPLB{1}{3}{2},\abshs \gamma_6=\PLBtwY{1}{2}{3},\\
&\gamma_7=\PLWPLW{1}{2}{3},\abshs \gamma_8=\PLWtwPLW{1}{2}{3},\abshs \gamma_9=\PLWtwPLW{1}{3}{2},\abshs
\gamma_{10}=\PLBPLB{1}{2}{3},\abshs \gamma_{11}=\PLBtwPLB{1}{2}{3},\abshs \gamma_{12}=\PLBtwPLB{1}{3}{2},\\
&\gamma_{13}=\YY{1}{2}{3}.
\end{align*}

To show that \eqref{conditionone} is satisfied it is sufficient, using Proposition \ref{twovertexcondition}, to observe that for any decorated two-vertex graph $\alpha\in B^{\Free(N)}\setminus B^{\Binijenhuis^{!}}$ with $\alpha=\sum c_i \gamma_i$, $c_i\in\K$, we have $c_i\neq 0\implies\gamma_i > \alpha$. Here $N=\K\PLWsmall\oplus\K\PLWopsmall\oplus\K\PLBsmall\oplus\K\PLBopsmall\oplus\K\Ysmall$.
\end{proof}

\begin{cor}
The operad $\Binijenhuis^{!}$, and thus also $\Binijenhuis$, is Koszul.
\end{cor}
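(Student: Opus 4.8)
The plan is to read the statement off directly from Hoffbeck's theorem. The theorem just proved exhibits an explicit PBW-basis $B^{\Binijenhuis^{!}}$ for $\Binijenhuis^{!}$ with respect to the ordering $\PLWopsmall < \PLBopsmall < \Ysmall < \PLWsmall < \PLBsmall$, so by Hoffbeck's theorem (``a reduced operad which has a PBW-basis is Koszul'') it remains only to observe that $\Binijenhuis^{!}$ is reduced. This is immediate from the definition of $N$: it is concentrated in arity two, in particular $N(0)=0$, so every nonzero decorated tree in $\Free(N)$ is reduced (cf.~\S\ref{PBWparagraph}), and hence $\Binijenhuis^{!}=\QO{N}{S}$ is spanned by reduced trees. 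This gives that $\Binijenhuis^{!}$ is Koszul.

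For the statement about $\Binijenhuis$ itself I would invoke the standard fact that a quadratic operad finite-dimensional in each arity is Koszul if and only if its Koszul dual operad is (see \cite{Ginzburg1994}). In the present setup this can be extracted from the tools already recalled: from $(M^{\cz})^{\cz}\iso M$ and $(R^{\perp})^{\perp}=R$ one gets $(\Binijenhuis^{!})^{!}\iso\Binijenhuis$, and combining the bar/cobar characterization of Koszulness with the identification \eqref{corkoszuldualkoszuldualoperad} of the Koszul dual cooperad in terms of the Koszul dual operad shows that Koszulness of $\Pcal$ and of $\Pcal^{!}$ are equivalent. Applying this with $\Pcal=\Binijenhuis^{!}$ yields that $\Binijenhuis$ is Koszul.

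There is essentially no obstacle at this stage: all of the work lies in the construction of the PBW-basis and in the verification of condition \eqref{conditionone} on two-vertex trees via Proposition \ref{twovertexcondition}, both already carried out. The only points the proof of the corollary needs to record are the reducedness of $\Binijenhuis^{!}$ and the symmetry of Koszulness under Koszul duality, both of which are formal; the argument runs exactly in parallel to the corresponding corollary for $\Nijenhuis$ above.
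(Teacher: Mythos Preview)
Your proposal is correct and matches the paper's approach: the paper states the corollary without proof, as an immediate consequence of Hoffbeck's theorem applied to the PBW-basis just constructed, together with the standard fact that Koszulness passes between a quadratic operad and its Koszul dual. Your write-up simply makes explicit the two points (reducedness of $\Binijenhuis^{!}$ and the duality symmetry) that the paper leaves tacit.
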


\subsection{The minimal resolution of $\Binijenhuis$}

\begin{thm}
\label{binijenhuisdifferential}
The minimal resolution  $\Binijenhuis_\infty$ of the operad $\Binijenhuis$ is the quasi-free operad $(\Free(E),\delta)$ where $E=\{E(n)\}_{n\geq 1}$ is the $\smodule$
\[
E(n)=
\begin{cases}
\bigoplus_{0\leq j \leq n-1}(\Ind^{\s_{n}}_{\s_{n-j}\times\s_{j}}\bbone_{n-j}\otimes
(\underbrace{\sgn_{j}\oplus\dotsb\oplus\sgn_{j}}_{\text{$j+1$ terms}})[j-1])   & \text{if $n\geq2$}\\
0 & \text{otherwise}.
\end{cases}
\]
We denote the basis element of $E(n)$ corresponding to the basis element of $\Binijenhuis^!(n)$ with $k$ white operations by
\[
\nijenhuisboxcorolla{I}{J}{k}\sim \YWBcorollab\shs,
\]
where $I=\{\sigma(1),\dotsc,\sigma(i)\}$ and $J=\{\sigma(i+1),\dotsc,\sigma(n)\}$. Note that it is symmetric in the input legs labeled by $I$, skew-symmetric in the ones labeled by $J$, and of degree $1-|J|$. The differential of $\Binijenhuis_\infty$ is then given by

\begin{multline*}
\delta\colon\nijenhuisboxcorolla{I}{J}{k} \mapsto \\\sum_{\substack{k_1+k_2=k \\ I_1 \sqcup I_2 =I \\ I_1 \sqcup I_2 \\ J_1 \sqcup \{J_2\} \sqcup J_3}}
\hspace{-5pt}(-1)^{\epsilon_1}\hspace{5pt}
\nijenhuissplitboxcorollaA
-\sum_{\substack{0 \leq j \leq n-1 \\ 1 \leq k \leq m \\ i_1+i_2=i \\
(j,n-j)\text{-shuffles} \shs \sigma \\ (k,m-k)\text{-shuffles} \shs \tau }}
\hspace{-15pt}(-1)^{\epsilon_2}\hspace{5pt}
\nijenhuissplitboxcorollaB \shs.
\end{multline*}
Here $\epsilon_1=|J_1|+|J_3|+\sigma(J_1\sqcup J_2 \sqcup J_3)$ and $\epsilon_1=|J_2|+\sigma(J_1\sqcup J_2)$, and $\sigma(J_1\sqcup J_2 \sqcup J_3)$ and $\sigma(J_1\sqcup J_2)$ denote the parities of the permutations $J\mapsto J_1\sqcup J_2 \sqcup J_3$ and $J\mapsto J_1\sqcup J_2$, where we assume the elements of each disjoint set to be ordered ascendingly.

\end{thm}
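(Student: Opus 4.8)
# Proof proposal

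The plan is to invoke the general machinery established earlier in the paper and then carry out the bookkeeping of signs. Since we have shown that $\Binijenhuis$ is Koszul (via the PBW-basis of $\Binijenhuis^!$), it follows that $\Binijenhuis_\infty=\CBC(\Binijenhuis^{\antishriek})$ is the minimal resolution. So the underlying quasi-free operad is $(\Free(\Sigma\,\ol{\Binijenhuis^{\antishriek}}),\delta)$ and the $\s$-module $E$ is just $\Sigma\,\ol{\Binijenhuis^{\antishriek}}$. The first step is therefore to read off $E$ from Proposition \ref{koszuldualbasis}: using \eqref{corkoszuldualkoszuldualoperad}, we have $(\Binijenhuis^{\antishriek})_{(s)}(n)\iso\Sigma^{-s}((\Binijenhuis^!)_{(s)}(n))^{\cz}$, and applying the extra suspension from $\Sigma\ol{(\,\cdot\,)}$ together with $(-)^{\cz}$ (which dualizes and twists by sign) turns the $\sgn_{n-p}\otimes(\bbone_p^{\oplus(p+1)})[n-1-p]$ summand with $s=n$ vertices into $\bbone_{n-j}\otimes(\sgn_j^{\oplus(j+1)})[j-1]$ with $j=p$. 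This gives the stated formula for $E(n)$, and the degree $1-|J|$ and the (skew)symmetry in the leg-labels $I$ (resp. $J$) are exactly what this twisting predicts; this part is essentially formal.

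The second and substantive step is to compute the differential $\delta=d'$ explicitly (the internal differential $d$ vanishes since $M$, hence $\Binijenhuis^{\antishriek}$, carries no internal differential). By the discussion in \S\ref{resolutionsofoperads}, $d'$ on $\CBC(\Binijenhuis^{\antishriek})$ is induced by the cocomposition of $\Binijenhuis^{\antishriek}$, which by \eqref{corkoszuldualkoszuldualoperad} is the linear dual of the composition product of $\Binijenhuis^!$. Concretely, $\delta$ applied to a generator $\nijenhuisboxcorolla{I}{J}{k}$ is the sum over all ways of writing that generator (as an element of $\Binijenhuis^!(n)$, in the PBW-basis description \eqref{nijenhuiskoszuldualoperadbasis}) as a composite $\gamma(\alpha;\beta)$ of two basis elements of $\Binijenhuis^!$ along a single internal edge, read as a two-vertex tree in $\Free(E)$. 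So the plan is: take the corolla description of the basis element of $\Binijenhuis^!(n)$ — the "string" $\YWBcorollab$ with a block of white vertices over a block of black vertices — and enumerate the two-vertex decompositions. There are exactly two families: one where the edge is cut below a sub-block of the white vertices (this splits the $I$-legs as $I_1\sqcup I_2$, distributes the $J$-legs as $J_1\sqcup\{J_2\}\sqcup J_3$ with $J_2$ the leg on the grafting edge, and splits the white-count as $k_1+k_2=k$), giving the first sum; and one where the edge is cut inside the black block, giving the second sum with a $(j,n-j)$- and $(k,m-k)$-shuffle. Each term comes with the Koszul sign produced by the desuspensions in $\Sigma\ol{(\,\cdot\,)}$ and by reordering the skew-symmetric $J$-legs across the cut — this is the origin of $\epsilon_1=|J_1|+|J_3|+\sigma(J_1\sqcup J_2\sqcup J_3)$ and $\epsilon_2=|J_2|+\sigma(J_1\sqcup J_2)$.

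The main obstacle will be the sign calculation: keeping track of the Koszul signs coming from (i) the suspension $\Sigma$ on each vertex-label, (ii) the degree $1-|J|$ of the generators, and (iii) the parity $\sigma(\cdot)$ of the shuffle permutations needed to present the skew-symmetric $J$-legs in ascending order on each side of the cut. I would fix conventions at the outset (ordering leg-labels ascendingly within each of $I_1,I_2,J_1,J_3$, and within each black-block) and then verify $\delta^2=0$ as a consistency check: the square of $\delta$ is a sum over three-vertex trees, and the coefficient of each such tree receives exactly two contributions (from the two ways of growing it one vertex at a time), which must cancel. Because $\CBC(\Binijenhuis^{\antishriek})$ is a cobar construction the identity $\delta^2=0$ is automatic, so this check really just pins down that the signs $\epsilon_1,\epsilon_2$ as written are the correct ones. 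Everything else — that $\phi\colon(\Free(E),\delta)\to\Binijenhuis$ killing $E$ and hitting the generators of $\Binijenhuis$ is a quasi-isomorphism, and that the resolution is minimal since $d'(E)\subset\Free_{(2)}(E)$ by construction — follows from Koszulness and from \cite{Ginzburg1994}.
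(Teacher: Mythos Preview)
Your proposal is correct and follows essentially the same route as the paper: invoke Koszulness to identify $\Binijenhuis_\infty=\CBC(\Binijenhuis^{\antishriek})$, read off $E=\Sigma\ol{\Binijenhuis^{\antishriek}}$ from Proposition~\ref{koszuldualbasis} via \eqref{corkoszuldualkoszuldualoperad}, and obtain $\delta$ by dualizing the composition product of $\Binijenhuis^!$. The paper's own proof is actually terser than yours---it dispatches the second step with ``tedious but straightforward graph calculations'' and does not spell out the two families of splittings or the sign bookkeeping you outline---so your extra detail (the enumeration of two-vertex decompositions and the $\delta^2=0$ consistency check) is a welcome elaboration rather than a departure.
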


\begin{proof}
From the Koszulness of $\Binijenhuis$ it follows that $\Binijenhuis_\infty=\Omega(\Binijenhuis^{\antishriek})$ is a quasi-free resolution of $\Binijenhuis$. The cobar construction is given by $\Omega(\Binijenhuis^{\antishriek})=\Free(\Sigma\ol{\Binijenhuis^{\antishriek}})$. Since $\Binijenhuis^{\antishriek}(n)$ is concentrated in weight $n-1$ it follows from \eqref{corkoszuldualkoszuldualoperad} and Proposition \ref{koszuldualbasis} that
\[
\Binijenhuis^{\antishriek}(n)=\bigoplus_{0\leq j \leq n-1}(\Ind^{\s_{n}}_{\s_{n-j}\times\s_{j}}\bbone_{n-j}\otimes
(\underbrace{\sgn_{j}\oplus\dotsb\oplus\sgn_{j}}_{\text{$j+1$ terms}})[j]).
\]
Setting $E=\Sigma\ol{\Binijenhuis^{\antishriek}}$ the first assertion of the theorem follows. Since $\Binijenhuis$ has zero differential, it follows that the differential $\delta$ of $\Omega(\Binijenhuis^{\antishriek})$ is fully determined by the cocomposition coproduct of $\Binijenhuis^{\antishriek}$. Through tedious but straightforward graph calculations one can determine the composition product of $\Binijenhuis^{!}$. Considering the linear dual of this product yields the differential $\delta$.
\end{proof}


\section{Geometrical interpretation of $\Binijenhuis_\infty$}
\label{geominterpret}

In this section we first show how representations of $\Binijenhuis_\infty$ correspond to pointed \Binijinfty\ structures. Then we present a conceptual framework in which we formulate this correspondence. Finally we observe that we recover ordinary bi-Nijenhuis structures when considering manifolds concentrated in degree zero.

\subsection{Representations of $\Binijenhuis_\infty$ as vector forms}
\label{correspondence}

Let $(V,d)$ be a dg vector space and let $\End_V$ denote the endomorphism operad of $V$. Since a morphism from a free operad is uniquely determined by the image of its generators, a morphism of graded operads $\rho\colon\Binijenhuis_\infty\to\End_V$ is equivalent to a family of degree zero linear maps
\[
 \{ \leftsub{k}{\mu}_{i,j}\colon V^{\odot i}\otimes V^{\wedge j} \to V[1-j]\}_{\substack{j \geq 0 ,i\geq 1\\ i+j \geq 2 \\ 0 \leq k \leq j}}.
\]
Such a morphism $\rho$ is a representation of $\Binijenhuis_\infty$ in $(V,d)$ if and only if it satisfies $\rho \delta|_{E}=d\rho|_E$, where $d$ denotes also the differential induced by $d$ on $\End_V$ and $E$ the generators of $\Binijenhuis_\infty$. This translates to a family of quadratic relations among the maps $\leftsub{k}{\mu}_{i,j}$.

From such a family of maps we construct a family of vector forms as follows. Let
\[
{\leftsub{k}{\Gamma}_{i,j}}=\frac{1}{i!j!}\leftsub{k}{\Gamma}^{c}_{(a_1 \dotsb a_i)[ b_1\dotsb b_j]}t^{a_1}\dotsb t^{a_i} \sdt^{b_1}\dotsb\sdt^{b_j}\otimes\p_c,
\]
where the numbers $\leftsub{k}{\Gamma}^{l}_{(a_1 \dotsb a_i)[ b_1\dotsb b_j]}\in\K$ are given by
\[
\begin{cases}
 \leftsub{k}{\mu}_{i,j}(e_{a_1}\odot \dotsb\odot e_{a_i} \otimes e_{b_1}\wedge \dotsb \wedge e_{b_j})=\leftsub{k}{\Gamma}^{c}_{(a_1 \dotsb a_i)[ b_1\dotsb b_j]} e_c & \text{if $i+j \geq 2$} \\
  d(e_{a_1})=-\leftsub{0}{\Gamma}^{c}_{a_1}e_{c}  & \text{if $i=1$ and $j=0$}.
\end{cases}
 \]
We assemble these vector forms into a power series in the formal parameter $\hslash$ with vector form coefficients, i.e.~into an element $\Gamma\in\vfmsh$,
\[
 \Gamma=\sum_{k \geq 0}\leftsub{k}{\Gamma}\hslash^k,\bhs\bhs \text{where $\leftsub{k}{\Gamma}=\sum_{\substack{i\geq 1 \\ j \geq k}}{\leftsub{k}{\Gamma}_{i,j}}$}.
\]
We introduce $\hslash$ to distinguish vector forms of the same weight but corresponding to different maps. We write $\FNh{\_}{\_}$ for the linearization in $\hslash$ of the Fr\"olicher-Nijenhuis bracket. Note that $\leftsub{k}{\Gamma}$ comes from the part of the representation of $\Binijenhuis_\infty$ which is obtained from the basis elements with $k$ white operations. Further we observe that $\leftsub{k}{\Gamma}\in\vfmshi{\geq k}$ and that the elements with this property form a Lie subalgebra $\gfrak_V$ of $(\vfmsh,\FNh{\_}{\_})$.

\begin{thm}
\label{main-thm}
A family of maps
\[
 \{ \leftsub{k}{\mu}_{i,j}\colon V^{\odot i}\otimes V^{\wedge j} \to V[1-j]\}_{\substack{j \geq 0 ,i\geq 1\\ i+j \geq 2 \\ 0 \leq k \leq j}}
\]
 is a representation of $\Binijenhuis_\infty$ in $(V,d)$ if and only if the corresponding power series $\Gamma\in\gfrak_V$ satisfies the conditions
 \begin{enumerate}
 \item \label{degreeone} $|\Gamma|=1$,
 \item \label{mc} $\FNh{\Gamma}{\Gamma}=0$,
 \item \label{pointed} $\Gamma|_{0}=0$.
 \end{enumerate}
\end{thm}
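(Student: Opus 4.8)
The plan is to prove this by the method Merkulov used in Section~5 of \cite{Merkulov2005} for the Nijenhuis case, now with an extra formal parameter $\hslash$ recording the two pre-Lie products (compare also \cite{Strohmayer2008}). Concretely, I would unwind the equations $\rho\delta|_{E}=d\rho|_{E}$ that characterise representations, using the explicit differential of $\Binijenhuis_\infty$ from Theorem~\ref{binijenhuisdifferential} together with the explicit formula for the Fr\"olicher--Nijenhuis bracket from \S\ref{FNbracket}, and then check that the resulting relations among the coefficients of $\Gamma$ are precisely the vanishing of all coefficients of $\FNh{\Gamma}{\Gamma}$ in the monomial basis $t^{a_{1}}\dotsb t^{a_{i}}\gamma^{b_{1}}\dotsb\gamma^{b_{j}}\otimes\p_{c}$ of $\vfmsh$.

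First I would dispose of conditions \eqref{degreeone} and \eqref{pointed}, which hold automatically for every $\Gamma$ built from a family of maps as in \S\ref{correspondence}. For \eqref{degreeone}: using $|t^{a}|=-|e_{a}|$, $|\gamma^{b}|=1-|e_{b}|$ and $|\p_{c}|=|e_{c}|$, the condition that $\leftsub{k}{\mu}_{i,j}\colon V^{\odot i}\otimes V^{\wedge j}\to V[1-j]$ have degree zero forces $|e_{c}|=\sum_{r}|e_{a_{r}}|+\sum_{s}|e_{b_{s}}|+1-j$, from which a direct count shows that each $\leftsub{k}{\Gamma}_{i,j}$, and hence $\Gamma$, is homogeneous of degree $1$; this is consistent with, and in fact dictates, the degree shifts carried by the generating $\s$-module $E$. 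Condition \eqref{pointed} holds because every generator of $\Binijenhuis_\infty$ has arity at least two, so every $\leftsub{k}{\Gamma}_{i,j}$ — including the linear vector field $\leftsub{0}{\Gamma}_{1,0}$ encoding $d$ — carries at least one factor $t^{a}$ and so $\Gamma|_{0}=0$; moreover $\Gamma\in\gfrak_V$ because $j\geq k$ throughout the sum defining $\leftsub{k}{\Gamma}$. It therefore remains to prove that $\rho\delta|_{E}=d\rho|_{E}$ is equivalent to \eqref{mc}.

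Since $\rho$ is a representation exactly when $\rho\delta|_{E}=d\rho|_{E}$, it is enough to compare $\rho\delta(g)$ and $d\rho(g)$ for each generator $g\in E$. Theorem~\ref{binijenhuisdifferential} expresses $\delta(g)$ as a signed shuffle sum of two-vertex trees, and $\rho$ turns each such tree into an operadic composite of two of the maps $\leftsub{k}{\mu}$; rewritten in the coefficients of $\Gamma$, this reproduces one of the four families of summands of the Fr\"olicher--Nijenhuis bracket — the two ``derivation of coefficients'' terms coming from the trees in which the output of the lower vertex enters an ordinary input of the upper one, and the two ``contraction'' terms from the trees in which it enters a $\gamma$-slot. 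Dually, $d\rho(g)$ is the value on $g$ of the differential induced by $d$ on $\End_V$, which under the dictionary of \S\ref{correspondence} is the bracket $\FNh{\leftsub{0}{\Gamma}_{1,0}}{\_}$ with the linear vector field, and therefore contributes exactly those summands of $\FNh{\Gamma}{\Gamma}$ in which $\leftsub{0}{\Gamma}_{1,0}$ is one of the arguments. Weighting the equation coming from a generator with $k$ white operations by $\hslash^{k}$, summing over all $g\in E$, and using that $\FNh{\leftsub{0}{\Gamma}_{1,0}}{\leftsub{0}{\Gamma}_{1,0}}=0$ by $d^{2}=0$, one obtains that the full system $\{\rho\delta(g)=d\rho(g)\}_{g\in E}$ is equivalent to the vanishing of every monomial coefficient of $\FNh{\Gamma}{\Gamma}$, that is, to \eqref{mc}.

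The one real difficulty is the sign and combinatorial bookkeeping. One must verify that the shuffle sums and the signs $\epsilon_{1}=|J_{1}|+|J_{3}|+\sigma(J_{1}\sqcup J_{2}\sqcup J_{3})$ and $\epsilon_{2}=|J_{2}|+\sigma(J_{1}\sqcup J_{2})$ of Theorem~\ref{binijenhuisdifferential} match, after applying $\rho$, the factors $(-1)^{pq}$ and the Koszul signs produced by reordering the $t^{a}$'s and $\gamma^{b}$'s inside the Fr\"olicher--Nijenhuis formula, and that the prefactors $\tfrac{1}{i!\,j!}$ in the definition of $\leftsub{k}{\Gamma}_{i,j}$ correctly absorb the symmetrizations implicit in passing between generators of $E$ and honest symmetric and skew tensors. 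One must also confirm that the grading of $E$ by the number of white operations is transported by $\rho$ to the $\hslash$-grading, so that the splitting $k_{1}+k_{2}=k$ occurring in $\delta(g)$ is matched by the collection of $\hslash^{k}$-coefficients of $\FNh{\Gamma}{\Gamma}$. As in the proof of Theorem~\ref{binijenhuisdifferential} and in \cite{Merkulov2005,Strohmayer2008}, these checks are tedious but entirely mechanical and introduce no conceptual obstacle beyond the ones handled there.
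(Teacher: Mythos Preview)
Your proposal is correct and follows essentially the same approach as the paper: the paper's proof simply asserts that under the correspondence $\rho\sim\Gamma$ one checks directly that $d\rho|_E=\rho\delta|_E$ is equivalent to $\FNh{\Gamma}{\Gamma}=0$, and that conditions \eqref{degreeone} and \eqref{pointed} reflect the degree $1-j$ of the maps $\leftsub{k}{\mu}_{i,j}$ and the constraint $i\geq 1$, respectively. Your write-up is in fact more explicit than the paper's, which leaves the sign and shuffle bookkeeping entirely to the reader.
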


\begin{proof}
Under the correspondence between vector form power series and families of maps described above, $\rho\sim\Gamma$, it is straightforward to show that $d \rho|_E= \rho \delta|_E$ is equivalent to $\FNh{\Gamma}{\Gamma}=0$. Conditions \eqref{degreeone} and \eqref{pointed} corresponds to that the degrees of the maps $\leftsub{k}{\mu}_{i,j}$ are $1-j$ and that $i\geq 1$, respectively.
\end{proof}

Note that Theorem \ref{introthmgraded} is just another formulation of the preceding theorem. In analogy with \Nijinfty\ structures we make the following definition.

\begin{defn*}
 A \Binijinfty\ structure on a manifold $V$ is a Maurer-Cartan element in $\gfrak_V$.
\end{defn*}

\subsection{A conceptual framework}

The previous paragraph can be rephrased in a nice way using two well-known results about operads.

Let $(\Ccal,\Delta,\delta_\Ccal)$ be a dg cooperad and $(\Pcal,\mu,\delta_\Pcal)$ a dg operad. The collection $\Pcal^\Ccal=\Hom(\Ccal,\Pcal)$ of all graded $\K$-module homomorphisms is an $\s$-module with components $\Hom(\Ccal,\Pcal)(n)=\Hom(\Ccal(n),\Pcal(n))$ and the action given by $(f\sigma)(x)=f(x\sigma^{-1})\sigma$. The invariants of this action are the $\s$-equivariant maps. The $\s$-module $\Pcal^\Ccal$ has an operad structure \cite{Berger2003} defined as follows. For a two-level tree $\tau$ let $\mu_\tau$ and $\leftsub{\tau}{\Delta}$ be the restrictions of $\mu$ and $\Delta$ to the parts of $\Pcal\circ\Pcal$ and $\Ccal\circ\Ccal$ given by $\tau$ and let $\tau(f;f_1,\dots,f_k)\colon\Ccal\circ\Ccal|_\tau\to\Pcal\circ\Pcal|_\tau$ be defined by applying the morphisms $f,f_1,\dotsc,f_k$ to the decorations of the appropriate vertices. The composition product in $\Pcal^\Ccal$ is then given by
\[
 \mu(f;f_1,\dotsc,f_k):=\mu_\tau\circ \tau(f;f_1,\dotsc,f_k)\circ \leftsub{\tau}{\Delta}.
\]
The differential $\delta_\Pcal$ and codifferential $\delta_\Ccal$ induce a differential $\partial$ given by $\partial(f)=\delta_\Pcal\circ f - (-1)^{|f|} f\circ\delta_\Ccal$. This gives $\Pcal^\Ccal$ a dg operad structure which descends to the invariants.

The total space $\Pcal^{\tot}=\oplus_n\Pcal(n)$ of a dg operad $\Pcal$ has the structure of a pre-Lie algebra \cite{Kapranov2001} given by $p\circ q =\sum_i p\circ_i q$, where $i$ ranges over all possible values. Thus it is a Lie algebra with $[p,q]=p\circ q - (-1)^{|p||q|} q \circ p$. Together with the differential of $\Pcal$ this makes $\Pcal^{\tot}$ a dg Lie algebra.

Let $\Qcal$ be a Koszul operad with zero differential, let $\Ccal$ denote the cooperad $\Sigma\ol{\Qcal^{\antishriek}}$, let $(V,d)$ be dg vector space, and denote $\End_V$ by $\Pcal$. The dg operad morphisms $\Qcal_\infty\to\Pcal$ correspond to the $\s$-invariants $\rho\in\Pcal^{\Ccal}$ such that $\rho\delta=d\rho$. When $\Qcal$ has zero differential, the differential of $\Qcal$ is completely determined by the cocomposition coproduct of $\Ccal$. That an element $\rho\in\Pcal^{\Ccal}$  satisfies $\rho\delta=d\rho$ is thus equivalent to that it satisfies $\partial(\rho)+\half[\rho,\rho]=0$. In other words, a representation of $\Binijenhuis_\infty$ in $(V,d)$ is a Maurer-Cartan element in the Lie algebra $\Lcal_\Qcal(V):=(((\Pcal^{\Ccal})^{\s})^{\tot},[\_,\_],\partial)$.

Let $\tilde{\gfrak}_V$ denote the Lie subalgebra of $\gfrak_V$ given by $\{\Gamma\in\gfrak_V;\text{$\Gamma|_0=0$ and $\leftsub{0}{\Gamma}_{1,0}=0$}\}$. Recall (\S \ref{fgm}) that the differential of a dg vector space $(V,d)$ induces a vector field $D\in\Tcal_V$. This vector field in turn induces a differential $\delta_D$ on $\gfrak_V$ given by $\FNh{D}{\_}$. If we write $\tilde{\Gamma}=\Gamma-D$ for an element $\Gamma\in\gfrak_V$, then $\FNh{\Gamma}{\Gamma}=0$ is equivalent to  $\delta_D(\tilde{\Gamma})+\half\FNh{\tilde{\Gamma}}{\tilde{\Gamma}}=0$, i.e.~a bi-Nijenhuis$_{\infty}$ structure on $V$ is a Maurer-Cartan element in $(\tilde{\gfrak}_V,\FNh{\_}{\_},\delta_D)$.

We define $\Phi\colon\Lcal_{\Binijenhuis}(V)\to\tilde{\gfrak}_V$ to be the vector space morphism given by $\Phi(\rho)=\tilde{\Gamma}$, where $\Gamma$ is the power series given by the correspondence in the previous section. Theorem \ref{main-thm} is now a corollary of the following observation.

\begin{prop}
The morphism $\Phi$ is an isomorphism of dg Lie algebras.
\end{prop}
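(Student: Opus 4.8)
The plan is to show that $\Phi$ is a bijection and that it intertwines both the brackets and the differentials. Establishing bijectivity should be routine: by construction the correspondence $\rho \leftrightarrow \Gamma$ in \S\ref{correspondence} is a linear reparametrization of the same data. A morphism $\rho\colon\Binijenhuis_\infty\to\End_V$ is the same as a family of maps $\leftsub{k}{\mu}_{i,j}$, and the assignment of the structure constants $\leftsub{k}{\Gamma}^c_{(a_1\dotsb a_i)[b_1\dotsb b_j]}$ to the $\leftsub{k}{\mu}_{i,j}$ is invertible component by component (the combinatorial prefactors $\tfrac{1}{i!j!}$ are nonzero since $\K$ has characteristic zero). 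The passage $\Gamma\mapsto\tilde\Gamma=\Gamma-D$ is an affine shift, but restricting to $\Lcal_{\Binijenhuis}(V)$, where the $\leftsub{0}{\mu}_{1,0}$-component encodes exactly $d$ (equivalently $D$), makes $\Phi$ a genuine linear isomorphism onto $\tilde\gfrak_V$. I would spell out that the target $\tilde\gfrak_V$ is precisely the image: the condition $\leftsub{0}{\Gamma}_{1,0}=0$ removes the linear part that has been absorbed into $D$, and $i\geq1$ forces $\Gamma|_0=0$.

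**Next I would verify** that $\Phi$ is a morphism of Lie algebras, i.e.\ $\Phi([\rho,\sigma]) = \FNh{\Phi(\rho)}{\Phi(\sigma)}$. The bracket on $\Lcal_{\Binijenhuis}(V) = (((\Pcal^\Ccal)^{\s})^{\tot},[\_,\_],\partial)$ is built from the pre-Lie product $\rho\circ\sigma = \sum_i \rho\circ_i\sigma$ on the total space of the convolution operad $\Pcal^\Ccal$, and this $\circ_i$ is defined through the cocomposition of $\Ccal=\Sigma\ol{\Binijenhuis^{\antishriek}}$ and the operadic composition of $\End_V$. The key point is that plugging $V^{\odot i}\otimes V^{\wedge j}$-type tensors into one another via $\End_V$-composition, after using the coproduct of $\Binijenhuis^{\antishriek}$, reproduces exactly the four-term combinatorial structure of the Fr\"olicher-Nijenhuis bracket recalled in \S\ref{FNbracket}: the two ``inner-derivation'' terms $K^i_{\dots}\p_i L^j_{\dots}$ come from composing at the output vertex, and the two ``algebraic'' terms $-pK^j_{\dots i\dots}\p_{a_p}L^i_{\dots}$ come from composing at an input leg. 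Here the $\hslash$-linearization $\FNh{\_}{\_}$ matches because the $\hslash$-grading on $\gfrak_V$ records the number of white operations, which is additive under the relevant compositions. This is essentially a bookkeeping verification that the structure constants transform correctly; I would state the matching of each of the four terms and leave the index chase implicit.

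**Then I would check** compatibility with the differentials, $\Phi\circ\partial = \delta_D\circ\Phi$. On the convolution side $\partial(\rho) = \delta_\Pcal\circ\rho - (-1)^{|\rho|}\rho\circ\delta_\Ccal$; since $\Binijenhuis$ has zero differential, $\delta_\Ccal$ is the cocomposition part, and this term is precisely $-[\,\cdot\,]$ against the rank-one piece that corresponds to $D$ — i.e.\ it becomes $\FNh{D}{\_}=\delta_D$ under $\Phi$ — while $\delta_\Pcal\circ\rho$ encodes the differential $d$ on $V$, which is also $\FNh{D}{\_}$ on structure constants (the sign in $d(e_{a_1})=-\leftsub{0}{\Gamma}^c_{a_1}e_c$ of \S\ref{correspondence} is chosen exactly to make this work). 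Combining the last two paragraphs, $\partial(\rho)+\tfrac12[\rho,\rho]=0$ transports to $\delta_D(\tilde\Gamma)+\tfrac12\FNh{\tilde\Gamma}{\tilde\Gamma}=0$, which recovers Theorem~\ref{main-thm}.

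**The main obstacle** will be the second step: pinning down the sign conventions so that the convolution Lie bracket literally equals the Fr\"olicher-Nijenhuis bracket, rather than a twisted version of it. Signs enter in three places — the Koszul sign in the $\s$-action $(f\sigma)(x)=f(x\sigma^{-1})\sigma$ and in $\partial$, the suspension/desuspension $\Sigma$ in $\Ccal=\Sigma\ol{\Binijenhuis^{\antishriek}}$ (which shifts degrees and hence introduces signs when operations are reordered), and the combinatorial signs $(-1)^{\epsilon_1},(-1)^{\epsilon_2}$ in the differential of Theorem~\ref{binijenhuisdifferential} — and these must conspire to reproduce the $(-1)^{pq}$ and the coefficient $p$ appearing in the F--N bracket formula of \S\ref{FNbracket}. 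I expect that a careful choice of the identification in \eqref{corkoszuldualkoszuldualoperad}, together with the degree conventions ($\leftsub{k}{\mu}_{i,j}$ lands in $V[1-j]$, the generator $\nijenhuisboxcorolla{I}{J}{k}$ has degree $1-|J|$), makes everything consistent; but this is exactly the place where ``straightforward graph calculations'' hide the real work, so I would present it as the heart of the proof.
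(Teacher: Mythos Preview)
The paper does not actually prove this proposition: it is stated as an ``observation'' with no argument given, so there is nothing to compare your approach against. Your overall plan---bijectivity, then bracket compatibility, then differential compatibility---is the natural one and more explicit than anything in the paper; the identification of sign bookkeeping as the real work is accurate.

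There is, however, a genuine confusion in your third step. You write that ``since $\Binijenhuis$ has zero differential, $\delta_\Ccal$ is the cocomposition part,'' and then attribute part of $\delta_D$ to the term $-(-1)^{|\rho|}\rho\circ\delta_\Ccal$. This is wrong: because $\Binijenhuis$ has zero differential, its Koszul dual cooperad $\Binijenhuis^{\antishriek}$ (and hence $\Ccal=\Sigma\ol{\Binijenhuis^{\antishriek}}$) has zero codifferential, so $\delta_\Ccal=0$. The cocomposition of $\Ccal$ enters the story only through the convolution \emph{bracket} $[\_,\_]$, not through $\partial$. You are conflating the codifferential $\delta_\Ccal$ of $\Ccal$ with the differential $\delta$ of the cobar construction $\Omega(\Ccal)$; the latter is built from cocomposition, the former is not.

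The correct argument for $\Phi\circ\partial=\delta_D\circ\Phi$ is simpler than what you wrote. Since $\delta_\Ccal=0$, one has $\partial(\rho)=\delta_\Pcal\circ\rho$, where $\delta_\Pcal$ is the differential on $\End_V$ induced by $d$, namely $f\mapsto d\circ f-(-1)^{|f|}\sum_i f\circ(\Id^{\otimes(i-1)}\otimes d\otimes\Id^{\otimes(n-i)})$. Under the correspondence of \S\ref{correspondence}, $d$ becomes the linear vector field $D$, and this commutator becomes the Lie derivative of the associated vector form along $D$; since $D\in\vfmsi{0}$, the Fr\"olicher--Nijenhuis bracket with $D$ \emph{is} the Lie derivative (cf.\ \S\ref{FNbracket}), so $\partial$ transports to $\FNh{D}{\_}=\delta_D$. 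Your sentence ``$\delta_\Pcal\circ\rho$ encodes the differential $d$ on $V$, which is also $\FNh{D}{\_}$ on structure constants'' is the whole story here; the preceding clause about $\delta_\Ccal$ should be deleted, and there is no double-counting to worry about.
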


\subsection{Representations of $\Binijenhuis_\infty$ in non-graded vector spaces}

If the vector space $V$ is concentrated in degree zero, then a representation of $\Binijenhuis_\infty$ in $V$ corresponds to an element $\Gamma=\leftsub{0}{\Gamma}+\leftsub{1}{\Gamma}\hslash\in\tilde{\gfrak}_V$ such that $\leftsub{k}{\Gamma}\in\vfmsi{1}$ and $\leftsub{k}{\Gamma}|_0=0$, i.e.~to a pointed bi-Nijenhuis structure on the formal manifold associated to $V$. In particular this proves Theorem \ref{introthmnongraded}.

\bibliographystyle{alpha}
\bibliography{binijenhuis}

\end{document}